\documentclass[11pt]{preprint}
\usepackage[full]{textcomp}
\usepackage[osf]{newtxtext} 
\usepackage[cal=boondoxo]{mathalfa}
\usepackage{colortbl}
\usepackage[normalem]{ulem}
\usepackage{amsmath}
\usepackage{pgf}
\usepackage{xcolor}
\usepackage{comment}

\usepackage{amssymb}
\usepackage{mathtools}
\usepackage{hyperref}
\usepackage{breakurl}
\usepackage{mhenvs}
\usepackage{mhequ} 
\usepackage{mhsymb}
\usepackage{booktabs}
\usepackage{tikz}
\usepackage{tcolorbox}
\usepackage{mathrsfs}
\usepackage[utf8]{inputenc}
\usepackage{longtable}
\usepackage{wrapfig}

\usepackage{microtype}
\usepackage{comment}
\usepackage{wasysym}
\usepackage{centernot}
\usepackage{enumitem}
\usepackage{bm}
\usepackage{stackrel}
\usepackage{graphicx}

\makeatletter
\newcommand{\globalcolor}[1]{%
	\color{#1}\global\let\default@color\current@color
}
\makeatother

\usetikzlibrary{calc}
\usetikzlibrary{decorations}
\usetikzlibrary{positioning}
\usetikzlibrary{shapes}
\usetikzlibrary{external}

\newif\ifdark
\darkfalse

\ifdark

\definecolor{darkred}{rgb}{0.9,0.2,0.2}
\definecolor{darkblue}{rgb}{0.7,0.3,1}
\definecolor{darkgreen}{rgb}{0.1,0.9,0.1}
\definecolor{franck}{rgb}{0,0.8,1}
\definecolor{pagebackground}{rgb}{.15,.21,.18}
\definecolor{pageforeground}{rgb}{.84,.84,.85}
\pagecolor{pagebackground}
\AtBeginDocument{\globalcolor{pageforeground}}
\definecolor{symbols}{rgb}{0,0.7,1}
\colorlet{connection}{red!80!black}
\colorlet{boxcolor}{blue!50}

\else

\definecolor{darkred}{rgb}{0.7,0.1,0.1}
\definecolor{darkblue}{rgb}{0.4,0.1,0.8}
\definecolor{darkgreen}{rgb}{0.1,0.7,0.1}
\definecolor{franck}{rgb}{0,0,1}
\definecolor{pagebackground}{rgb}{1,1,1}
\definecolor{pageforeground}{rgb}{0,0,0}
\colorlet{symbols}{blue!90!black}
\colorlet{connection}{red!30!black}
\colorlet{boxcolor}{blue!50!black}

\fi

\def\slash{\leavevmode\unskip\kern0.18em/\penalty\exhyphenpenalty\kern0.18em}
\def\dash{\leavevmode\unskip\kern0.18em--\penalty\exhyphenpenalty\kern0.18em}

\DeclareMathAlphabet{\mathbbm}{U}{bbm}{m}{n}

\DeclareFontFamily{U}{BOONDOX-calo}{\skewchar\font=45 }
\DeclareFontShape{U}{BOONDOX-calo}{m}{n}{
	<-> s*[1.05] BOONDOX-r-calo}{}
\DeclareFontShape{U}{BOONDOX-calo}{b}{n}{
	<-> s*[1.05] BOONDOX-b-calo}{}
\DeclareMathAlphabet{\mcb}{U}{BOONDOX-calo}{m}{n}
\SetMathAlphabet{\mcb}{bold}{U}{BOONDOX-calo}{b}{n}

\setlist{noitemsep,topsep=4pt,leftmargin=1.5em}

\DeclareMathAlphabet{\mathbbm}{U}{bbm}{m}{n}

\DeclareMathAlphabet{\mcb}{U}{BOONDOX-calo}{m}{n}
\SetMathAlphabet{\mcb}{bold}{U}{BOONDOX-calo}{b}{n}
\DeclareFontFamily{U}{mathx}{\hyphenchar\font45}
\DeclareFontShape{U}{mathx}{m}{n}{
	<5> <6> <7> <8> <9> <10>
	<10.95> <12> <14.4> <17.28> <20.74> <24.88>
	mathx10
}{}
\DeclareSymbolFont{mathx}{U}{mathx}{m}{n}
\DeclareMathSymbol{\bigtimes}{1}{mathx}{"91}

\setlength{\marginparwidth}{3cm}

\providecommand{\figures}{false}
{ \ifthenelse{\equal{\figures}{false}} {#1}{\[ {\rm Figure \ missing !} \]} }{}



\newtheorem{scheme}{Scheme}[section]

\definecolor{oxfordblue}{rgb}{0.0, 0.13, 0.28}
\tikzstyle{leaf}=[circle, draw=black, fill=gray!20, inner sep = .1pt, font=\tiny, minimum size=3mm]
\tikzstyle{inner}=[circle, minimum size=1.5mm, fill=oxfordblue, inner sep =0]
\tikzstyle{conj} = [thick, color=maroon, dotted,dash pattern=on 1pt off 1pt]
\tikzstyle{conj-int} = [color=cyan, dotted]
\tikzstyle{int} = [color=cyan]
\tikzstyle{positive} = [thick, color=maroon]
\tikzstyle{Phi} = [thick, color=maroon, snake=snake,
segment length=5pt,
segment amplitude=1pt]
\tikzstyle{Phi-conj} = [thick, color=maroon, snake=snake,
segment length=5pt,
segment amplitude=1pt,dashed]
\tikzstyle{stoch-int} = [color=cyan, snake=zigzag,
segment amplitude=.2mm,
segment length=.5mm]
\tikzstyle{stoch-conj-int} = [color=cyan, snake=triangles,
segment amplitude=.25mm,
segment length=.8mm]
\definecolor{modebeige}{rgb}{0.59, 0.44, 0.09}
\definecolor{maroon}{rgb}{0.5, 0.0, 0.0}
\definecolor{mayablue}{rgb}{0.45, 0.76, 0.98}




\newcommand{\msL}{\mathscr{L}}


\newcommand{\cF}{{\mathcal F}}

\def\CH{\mathcal{H}}

\def\CT{\mathcal{T}}
\def\mcO{\mathcal{O}}

\tikzstyle{tinydots}=[dash pattern=on \pgflinewidth off \pgflinewidth]
\tikzstyle{superdense}=[dash pattern=on 4pt off 1pt]



\newcommand{\mcL}{\mathcal{L}}

\newcommand{\mcT}{\mathcal{T}}


\newcommand{\mbbE}{\mathbb{E}}

\newcommand{\mbbN}{\mathbb{N}}
\newcommand{\mbbP}{\mathbb{P}}
\newcommand{\RR}{\mathbb{R}}

\newcommand{\mbbT}{\mathbb{T}}

\newcommand{\mbbZ}{\mathbb{Z}}


\newcommand{\mfn}{\mathfrak{n}}

\newcommand{\mfe}{\mathfrak{e}}

\newcommand{\mfL}{\mathfrak{L}}

\newcommand{\mft}{\mathfrak{t}}

\newcommand{\mfp}{\mathfrak{p}}

\newcommand{\mfl}{\mathfrak{l}}

\newcommand{\mff}{\mathfrak{f}}

\def\Lab{\mathfrak{L}}
\newcommand{\Labhom}{\mathfrak{t}}

\def\${|\!|\!|}

\def\msL{\mathscr{L}}

\newcommand{\pa}{\partial}
\newcommand{\rbr}[1]{\left(#1\right)}
\newcommand{\sqbr}[1]{\left[#1\right]}
\newcommand{\abs}[1]{\left| #1 \right|}
\newcommand{\norm}[1]{\left| \left| #1 \right| \right|}

\newcommand{\set}[1]{\left\lbrace  #1 \right\rbrace }

\newcommand{\nin}{\backslash}

\newcommand{\Ups}{\Upsilon}
\newcommand{\lam}{\lambda}

\newcommand{\grad}{\nabla}

\newcommand{\lap}{\Delta}

\newcommand{\pint}[1]{\the\numexpr #1 \relax}

\newenvironment{DIFnomarkup}{}{} 

\theorembodyfont{\rmfamily}

\newfont{\indic}{bbmss12}

\def\Nabla_#1{\nabla_{\!#1}}

\let\eps\varepsilon

\def\eqref#1{(\ref{#1})}

\makeatletter 
\newcommand*{\bigcdot}{}
\DeclareRobustCommand*{\bigcdot}{%
	\mathbin{\mathpalette\bigcdot@{}}%
}
\newcommand*{\bigcdot@scalefactor}{.5}
\newcommand*{\bigcdot@widthfactor}{1.15}
\newcommand*{\bigcdot@}[2]{%
	\sbox0{$#1\vcenter{}$}
	\sbox2{$#1\cdot\m@th$}%
	\hbox to \bigcdot@widthfactor\wd2{%
		\hfil
		\raise\ht0\hbox{%
			\scalebox{\bigcdot@scalefactor}{%
				\lower\ht0\hbox{$#1\bullet\m@th$}%
			}%
		}%
		\hfil
	}%
}
\makeatother

\tcbset
{colframe=boxcolor,colback=symbols!7!pagebackground,coltext=pageforeground,
	fonttitle=\bfseries,nobeforeafter,center title,size=fbox,boxsep=1.5pt,
	top=0mm,bottom=0mm,boxsep=0mm,tcbox raise base}

\def\two{{\<generic>\kern0.05em\<genericb>}}
\def\twoI{{\<Ito>\kern0.05em\<Itob>}}

\def\mail#1{\burlalt{#1}{mailto:#1}}

\begin{document}
	\usetikzlibrary{snakes}
	\title{Resonance based schemes for SPDEs}\author{ J. Armstrong-Goodall$^1$, Y. Bruned$^2$}
	\institute{ Maxwell Institute for Mathematical Sciences, University of Edinburgh \and  IECL (UMR 7502), Université de Lorraine
		\\
		Email:\ \begin{minipage}[t]{\linewidth}
		\mail{j.a.armstrong-goodall@sms.ed.ac.uk}, \\ 	\mail{yvain.bruned@univ-lorraine.fr}.
	\end{minipage}}
	
	\maketitle
	
	\begin{abstract}
		Resonance based numerical schemes are those in which cancellations in the oscillatory components of the equation are taken advantage of in order to reduce the regularity required of the initial data to achieve a particular order of error and convergence. We investigate the potential for the derivation these schemes in the context of nonlinear stochastic PDEs. By comparing the regularity conditions required for error analysis to traditional exponential integrators we demonstrate that at orders less than $ \mcO(t^2) $, that resonance based techniques are successful and provide a significant gain on the regularity of the initial data while, at orders greater than $ \mcO(t^2) $, they do not achieve any gain. This restriction is due to limitations in the explicit path-wise analysis of stochastic integrals. As examples of applications of the method, we present schemes for the Schr\"odinger equation and Manakov system accompanied by local error and stability analysis, as well as proof of global convergence in both the strong and path-wise sense.  
	\end{abstract}

	\setcounter{tocdepth}{2}
	\tableofcontents

	\section{Introduction}
	In this work, we are investigating time discretisation for a large class of stochastic partial differential equations which are of the form:
	\begin{equation}\label{dis}
		\begin{aligned}
			& i \partial_t u(t,x) +   \mathcal{L}\left(\nabla \right) u(t,x) =\vert \nabla\vert^\alpha p\left(u(t,x), \overline u(t,x)\right) \\ & +\vert \nabla\vert^\beta  f(u(t,x),\overline u(t,x)) W(t,x), \quad
			u(0,x) = v(x),
		\end{aligned}
	\end{equation}
	where we focus on the case of periodic boundary conditions $x\in \mathbb{T}^d$. Further, we assume that $p$ is a polynomial nonlinearity and $ f $ is linear in $ u(t,x) $ or  $ \overline{u}(t,x) $. The term $ W(t,x) $ is a space-time noise. We assume that structure of \eqref{dis} implies at least almost local wellposedness of the problem on a finite time interval $]0,T]$, $T<\infty$, in an appropriate functional space. In general, we will work with a noise white in time and coloured in space. 
	
	A general framework of resonance based schemes for this class of equations was developed in \cite{B.S2022} for the deterministic setting, that is, without noise ($f=0$). The key  idea of \cite{B.S2022} lies in embedding the underlying structure of resonances - triggered by the nonlinear frequency interactions between the leading differential operator $\mathcal{L}\left(\nabla \right)$ and the nonlinearities $p\left(u(t,x), \overline u(t,x)\right)$ and $ \vert \nabla\vert^\beta  f(u(t,x),\overline u(t,x)) W(t,x) $ - into the numerical discretisation. These nonlinear interactions are in general neglected by classical approximation techniques such as Runge-Kutta methods, splitting methods or exponential integrators. While for smooth solutions these nonlinear interactions are indeed negligible, they do play a central role at low regularity. This crucial idea was intially worked out by constructing specific examples:
	The first {\it resonance based schemes} were developed on the torus for the Korteweg de Vries (KdV) equation \cite{H.S2017} 
	\begin{equation}\label{kdvIntro}
		\partial_t u -i\mathcal{L}\left(\nabla\right) u = \frac12 \partial_x u^2, \quad \mathcal{L}\left(\nabla\right) = i\partial_x^3, \quad \vert \nabla\vert^\alpha = \partial_x, \quad x \in \mathbb{T},
	\end{equation}
	and later for the nonlinear Schr\"odinger (NLS) equation \cite{O.S2018},
	\begin{equation}\label{nlsIntro}
		i \partial_t u + \mathcal{L}\left(\nabla\right)  u = \vert u\vert^2 u, \quad \mathcal{L}\left(\nabla\right) = \Delta, \quad x\in \mathbb{T}^d.
	\end{equation}
	The main contribution of \cite{B.S2022} is to systematize this approach via the use of decorated trees that combine two combinatorial structures. The first combinatorial structure are decorated trees that reflects the Fourier decomposition of an iterated integral, know for a while in dispersive equations (see for example \cite{C2009,G2012,G.K.O2013}). The second combinatorial structure are decorated trees used for  singular stochastic partial differential equations. These trees have provided a general framework for solving a large class of singular SPDEs via the theory of Regularity Structures (see \cite{B.C.C.H2020,B.H.Z2019,C.H2018,H2014}). The decorated trees in \cite{B.S2022} are used for coding a new type of Butcher series extending \cite{B1972} and together with Hopf algebraic structures, one can conduct the local error analysis.  These low regularity schemes show better behaviour and improve the regularity normally required in classical numerical schemes (see \cite{H.O2010,H.K.L.R2010,I.Z2009,L2008}).
	
	Several extensions have been considered within this general framework such as: more general nonlinearities like non-polynomial $ f $ and a deterministic spatial potential $ W(x) $ (see \cite{B.B.S2022}), in Wave turbulence theory when one wants to discretise the second moment of the Fourier coefficient of the solution (see \cite{B.B.S2023}) and the construction of symmetric schemes by exploiting different ways of iterating Duhamel formulation and by using symmetric polynomial interpolation (see \cite{B.B.M.S2023}). Symplectic methods are partially treated \cite{M.S2023} and are still an open problem for low regularity schemes.
	
	In the present work, we investigate an extension of this framework to SPDEs but not with a singular noise as some smoothness is required in space. The idea is to apply also the resonance analysis in this context for minimising the regularity asked on the initial data. Indeed, in the literature for time step integrator for SPDEs, the regularity asked on the initial data is pretty large and most of the time low order schemes are considered. Previous work on numerical schemes for the stochastic NLS (SNLS) with both additive and multiplicative noise is done by De Bouard and Debussche in \cite{B.D2004} and \cite{B.D2006} where they present error analysis for the stochastic version of the energy preserving scheme introduced in the earlier paper by Delfour, Fortin and Payre in \cite{D.F.P1981}. The Manakov equation is presented in the context of optical physics in \cite{M.A.S2012} and the mathematical analysis performed in the thesis of Gazzeau \cite{G2012a} and the paper by the same author on a numerical scheme \cite{G2014}. This is followed by work done by Berg, Cohen and Dujardin in \cite{B.C.D2020,B.C.D2021}. 
	
	Our main contribution is to show that for low order schemes it is possible to get a significant improvement for the regularity asked on the initial data. Our strategy is to apply classical discretisations to integrals containing only the noise and to use the resonance analysis developed in \cite{B.S2022} for dealing with the nonlinearities.
	We apply our techniques to the stochastic NLS with additive and multiplicative noise 
	\begin{equation}
		i\pa_t u(t,x) + \lap u(t,x) + \lam u(t,x) \abs{u(t,x)}^2  = \sigma(u(t,x)))  W(t,x)
	\end{equation}
	where $ \sigma(u) \in\set{\one, u(t,x)} $ and $ x \in  \mbbT^3  $. The Manakov system, which in Stratanovich form reads,
	\begin{equs}
		i \partial_t u(t,x) + \Delta u(t,x) + \lam u(t,x) \abs{u(t,x)}^2  + i\sqrt{\gamma}\sum_{j=1}^3\sigma_k\grad u(t,x) \circ dW_j(t) = 0
	\end{equs}
	In the Manakov system the noises are now three independent Brownian motions, the $ \sigma_i $ refer to the Pauli matrices and the gradient structure of the multiplicative noise turns out to ask some regularity on the initial data.
	We proceed by first iterating Duhamel's formula, see \eqref{dis}, in Fourier space, producing a decorated tree series that encodes various iterated integrals.
	This is given by a Butcher's series with a more combinatorially complex set of decorated trees, 
	\begin{equs}
		U^{r}_k(t,v) =   \sum_{T \in \CT^{r,k}_{0}(R)} \frac{\Upsilon( T)(v)}{S(T)} (\Pi   T )(t),
	\end{equs}
	where $ k \in \mathbb{Z}^d $, $ \CT^{r,k}_{0}(R) $ is a set of decorated trees of order smaller than $ \mathcal{O}(t^{r}) $. This implies that the iterated integral $ (\Pi   T )(t) $ associated to the tree $T$ has order less than: $ (\Pi   T )(t) = \mathcal{O}(t^{\alpha_{\tau}}) $, with $ \alpha_{\tau} \leq r $.  The coefficients $ \Upsilon(T) $ and $ S(T) $ are, respectively, the elementary differential and the symmetry factor associated to $T$. From Proposition~\ref{tree_series}, it follows that this series is a good approximation of the $k^{th}$ Fourier coefficient $ u_k $ of the solution with order $ \mathcal{O}(t^{r + \frac{1}{2}}) $. The fractional power $ \frac{1}{2} $ comes from the fact that we consider Wiener process in time. The general scheme is obtained by replacing each of the $ (\Pi T)(t) $ by the corresponding discretisation $ (\Pi^{n,r} T)(t) $ where $ n $ is the regularity of the initial data $ v \in H^n $ and $ r $ is the order of the discretisation, in the sense that $  (\Pi^{n,r} T)(t) $ is of order $ \mathcal{O}(t^{r + \frac{1}{2}}) $. Consequently, the scheme is given by
	\begin{equs}
		U^{n,r}_k(t,v) =   \sum_{T \in \CT^{r,k}_{0}(R)} \frac{\Upsilon( T)(v)}{S(T)} (\Pi^{n,r}   T )(t).
	\end{equs}
	The low regularity paradigm relies on the premise that the discretisation of $ (\Pi^{n,r}   T )(t) $ can be done in such a way that the terms are easy to write back in Physical space with usual operators. The discretisation can be divided into two different types of integral:
	\begin{equs}
		\text{Resonance: }  \int_{0}^{t}e^{isP}d s.
		\quad
		\text{Stochastic: }  \int_{0}^{t}e^{isP} \Phi_{k_i} d W_{k_i}(s).
	\end{equs}
	where $ P $ in both cases is polynomial in some frequencies $k_1,...,k_n$ and $ \Phi_{k_i} d W_{k_i}(s) $ is the $ k_i $th Fourier coefficient of $ \Phi d W(s) $. For the resonance integral, one wants to decompose $ P $ into lower part $P_{\text{\tiny{low}}}$ and dominant part $ P_{\text{\tiny{dom}}} $ such that:
	\begin{equs}
		P = P_{\text{\tiny{dom}}} + P_{\text{\tiny{low}}}, \quad \int_{0}^{t}e^{isP}d s = \int_{0}^{t}e^{isP_{\text{\tiny{dom}}}}  \left( 1 + i s P_{\text{\tiny{low}}} + .... \right) d s.
	\end{equs}
	Then the dominant part is integrated exactly to produce an approximation at order $ \mathcal{O}(t^2) $ of the form:
	\begin{equs}
		\int_{0}^{t}e^{isP}d s = \frac{e^{it P_{\text{\tiny{dom}}}}-1}{iP_{\text{\tiny{dom}}} } + \mathcal{O}(t^2 P_{\text{\tiny{low}}}) 
	\end{equs}	
	In order to guarantee the success of this approach one has to have $ P_{\text{\tiny{low}}} $ of lower degree than $ P $ in the sense that it will ask less regularity on the initial data. The second crucial point is to be able to rewrite $ \frac{1}{P_{\text{\tiny{dom}}}} $ in Physical space with usual differential operators. This technique works perfectly well for deterministic integrals when such a good decomposition into dominant and lower part exists which is the case for some dispersive equations.
	For the stochastic integral, resonance analysis cannot be performed. Indeed, one cannot have in general a close form for
	\begin{equs}
		\int_{0}^{t}e^{isP_{\text{\tiny{dom}}}} \Phi_{k_i} d W_{k_i}(s).
	\end{equs}
	In this case, we are forced to use classical schemes and fully Taylor expand:
	\begin{equs}
		\int_{0}^{t}e^{isP_{\text{\tiny{dom}}}} \Phi_{k_i} d W_{k_i}(s) = \int_{0}^{t} \Phi_{k_i} d W_{k_i}(s) + \mathcal{O}(t^{\frac{3}{2}}P_{\text{\tiny{dom}}} \Phi_{k_i}).
	\end{equs}
	This will require some regularity on $ \Phi $ for absorbing the extra regularity needed. What we observe in the end is that we expect to have gain on the regularity on the initial data up to the order $ \mathcal{O}(t^{3/2}) $ and for higher order we cannot do better than usual schemes. This is summarised in Theorem~\ref{main_theorem_general} the main theorem of the paper. We check the first part on examples in the sequel by looking at the NLS with multiplicative noise and the Manakov system but its general proof is quite similar to the one for NLS which contains most of the difficulties. The main results are given by the Schemes \ref{scheme:schrodinger_O1/2}, \ref{scheme:schrodinger_O1} \ref{scheme:manakov_O1/2} and their local error are treated in  Propositions \ref{expansion_scheme_NLS}, \ref{prop:schrodinger_O1/2_local}, \ref{Manakov_local_error} and Theorem~\ref{scheme_order_2}. Satibility and global error are proved in Propositions \ref{lem:stability} and \ref{prop:global_error} For the second part of the theorem which says that one cannot do better than usual schemes for higher orders, we exhibit an example of iterated integrals where low regularity cannot be achieved. This is the object of Proposition~\ref{counter_example}. 
	
	One future direction will be to look to schemes satisfying some symmetries. Symmetric schemes, those in \cite{B.B.M.S2023} for instance, are well understood in the determinist context. One can then easily build schemes of order two that will be symmetric. The main task will be to check how the stochastic methods will work with implicit schemes. Another interesting symmetry will be symplectic methods (see  \cite{M.S2023}) but they are not very general for deterministic schemes. An interesting but technically challenging question is whether it's possible to produce schemes with both low regularity spatial noise and initial conditions. 
	
	Finally, let us outline the paper by summarising the content of its sections. In Section~\ref{Sec::2}, we present the main SPDEs under consideration and precisely define the noise. We start  here the first step of the scheme by explaining how to iterate the Duhamel formulation in order to produce a series of decorated trees that will be truncated depending on the order of the scheme. Decorated trees are introduced in Definition \ref{decorated_trees} and the truncated tree series is given in Proposition~\ref{tree_series}. At the end of this section, we state one of our main theorems, that is, Theorem~\ref{main_theorem_general}. It  gives the order at which the resonance based approach ceases to work for SPDEs.
	
	In Section~\ref{Sec::3}, we introduce the technique of resonance analysis which is the main tool for discretising the iterated integrals. We first explain the discretisation of the nonlinearity before comparing this to the discretisation of the stochastic integral. The main example to which we apply these techniques is the stochastic NLS with multiplicative noise and cubic nonlinearity for which we derive a discretisation in Fourier space up to order $ \mathcal{O}(t^{3/2}) $ in Propositon \ref{expansion_scheme_NLS}. We also discuss the mechanism by which we can lower regularity conditions in the deterministic case and how this does not translate to stochastic PDEs at higher orders. The latter is verified by exhibiting an iterated integral where the resonance techniques cannot be applied in Proposition \ref{counter_example}. 
	
	In Section~\ref{Sec::4}, we begin by computing the local error analysis of various schemes for SPDEs including stochastic NLS with additive and multiplicative noise and the Manakov equation. The main schemes are given by Schemes \ref{scheme:schrodinger_O1/2}, \ref{scheme:schrodinger_O1} \ref{scheme:manakov_O1/2} and their local error are treated in  Propositions \ref{expansion_scheme_NLS}, \ref{prop:schrodinger_O1/2_local}, \ref{Manakov_local_error} and Theorem~\ref{scheme_order_2}. We introduce methods previously used in PDEs for stabilizing numerical schemes and proving global convergence and apply them to the analysis of schemes for SPDEs. These methods lead to a streamlined method for establishing convergence since we can avoid an implicit discretisation of the noise. The method of obtaining global error estimated is quite general as it is derived directly from the local error using arguments originally established for ODEs and SDEs, see respectively \cite{B.B2001,H.W.L2002}.  The main results in this section are Propositions \ref{lem:stability} and \ref{prop:global_error}.

	\subsection*{Acknowledgements}
	
	{\small
		Y. B. thanks the Max Planck Institute for Mathematics in the Sciences (MiS) in Leipzig for having supported his research via a long stay in Leipzig from January to June 2022. Y. B. gratefully acknowledges funding support from the European Research Council (ERC) through the ERC Starting Grant Low Regularity Dynamics via Decorated Trees (LoRDeT), grant agreement No.\ 101075208.  J. A. G.  thanks the Max Planck Institute for Mathematics in the Sciences (MiS) for a short stay in Leipzig during which this work started. 
		J. A. G. is supported by the EPSRC Centre for Doctoral Training in Mathematical Modelling, Analysis and Computation (MAC-MIGS) funded by the UK Engineering and Physical Sciences Research Council (grant EP/S023291/1), Heriot-Watt University and the University of Edinburgh.
	}

	\section{Duhamel iteration and decorated trees}
	\label{Sec::2}
	We introduce the equations in more detail in \ref{dispersive_spde_general}, specifying the definitions for each term. Most importantly we carefully define the cylindrical Brownian motion and the Hilbert-Schmidt operator which acts by smoothing out the noise in space. It's also necessary to introduce Duhamel's formulation or mild form of each SPDE which are the object we wish to discretise in order to produce our schemes. Related to Duhamel's formulation are Duhamel iterations which are one of the techniques, alongside Taylor expansions, that we use to increase the order of the error terms in our schemes. Duhamel iterations can be represented conveniently as tree diagrams which also serve as a tool for describing higher order schemes and generalising the proofs to broader classes of equations. Definitions related to Duhamel's formulation can be found in subsection \ref{Duhamel_formulation}.
	\subsection{Main examples and choices for the noise}\label{dispersive_spde_general}
	
	The main examples of \eqref{dis} are the cubic nonlinear Schr\"{o}dinger which can be written in the following form in $ \mbbT^3 $
	\begin{equation} \label{schrodinger_equation}
		i\pa_t u(t,x) + \lap u(t,x) + \lam u(t,x) \abs{u(t,x)}^2  = \Phi\xi(x,t) \sigma(u(t,x)))
	\end{equation}
	where $ \sigma(u) \in\set{\one, u(t,x)} $.
	\begin{itemize}
		\item The unknown $ u \in L^2(\RR) $ is a random process on a probability space $ \rbr{\Omega, \cF, \mbbP} $. 
		\item The case $ \sigma=1 $ corresponds to additive noise while $ \sigma = u $ corresponds to multiplicative noise.
		\item $\xi$ is a space time white noise, having the properties
		\begin{equation}
			\mbbE[\xi(t,x)]=0,\quad\mbbE[\xi(t,x)\xi(s,y)] = \delta(t-s)\delta(x-y)
		\end{equation}
		\item The smoothing operator $ \Phi:L^2(\RR^d) \rightarrow  H^s(\RR^d) $  is a Hilbert-Schmidt operator. The role of $ \Phi $ is to spatially smooth the space-time white noise $ \xi(t,x) $. This will be discussed in detail below.
		\item The constant $ \lam\in\set{-1,1} $ determines whether the equation is focusing ($ \lam=1 $) or defocusing ($ \lam=-1 $). 
	\end{itemize}
	
	The space-time Weiner process $ W(t,x) $ can be defined on the torus $ \mbbT^3 $ as 
	\begin{equation}
		W(t,x) = \sum_{n \in \mbbZ^d}W_n(t)e^{ i nx}.
	\end{equation}
	where  $ (W_n)_{n\in\mathbb{Z}^d} $ is a family of complex valued Brownian motions such that $ \text{Re}(W_n) $ and $ \text{Im}(W_n) $ are independent real value Brownian motions.
	Then, the map $ \Phi $ acts in the following way:  
	\begin{equation*}
		\Phi W(t,x) = \sum_{n \in \mbbZ^d} W_n (t) \Phi(e^{ i nx})
		=  \sum_{n \in \mbbZ^d} (\Phi W(t,x))_n e^{ i nx}.
	\end{equation*}
	In the rest of the paper without loss of generality, we will consider a map $ \Phi $ such that
	\begin{equs}
		\Phi( e^{ i nx}) = \Phi_n e^{ i nx}
	\end{equs}
	where $ \Phi_n $ are coefficients depending on $ n $. Also suppose $ \Phi^* $ is the adjoint of $ \Phi $ then we require
	\begin{equs}
		\mathrm{Tr}(\Phi\Phi^*) = \sum_{k \in \mathbb{Z}^d} \Phi_k\Phi^*_k < + \infty.
	\end{equs}
	\begin{remark}
		By expanding the Schr\"{o}dinger semigroup $ e^{it\lap} $ we will obtain terms like 
		\begin{align*}
			\lap\Phi W(t,x) &= \sum_{k \in \mathbb{Z}^d}W_k(t) k^2 \Phi_{k} e^{ i kx}.
		\end{align*}
		In that case, we will ask that $ 	\mathrm{Tr}((\Delta \Phi)^2) = \sum_{k \in \mathbb{Z}^d} \Phi_k^2 k^4 < + \infty. $
	\end{remark}
	To discretise the cylindrical Brownian motion, first define $ \Phi\chi\sim N(0,I_d) $ such that
	\begin{align}\label{eq:discrete_W}
		\int_0^\tau\Phi d W(s) &= \Phi (W(s) -W(0))
		= \sqrt{t} \frac{\Phi (W(s) -W(0))}{\sqrt{t}} =  \sqrt{t}\Phi\chi.
	\end{align} 
	If we consider the increment $ W(t_{\ell}) - W(t_{\ell-1}) $ such that $ t_{\ell} - t_{\ell-1}  = t$, we set the same definition with the notation $ \Phi\chi_{\ell} $. This increment is distributed according to the $d-$dimensional Gaussian distribution with mean $ 0 $ and covariance $ t I_d$. For double Itô integrals,  then we have, by Itô's formula, that
	\begin{align}\label{eq:discrete_W^2}
		\int_{0}^{t}\int_{0}^{s}\Phi dW(r)\Phi dW(s) =\frac{t}{2} ((\Phi\chi)^2 - \mathrm{Tr}(\Phi^2)).
	\end{align}
	We will also use the triple integral which again by Itô's formula, denoting by $ D\chi $ the $ d\times d $ diagonal matrix with $ d- $dimensional Gaussian along the diagonal,  
	\begin{equation}
		\int_{0}^{t}\int_{0}^{s}\int_0^{s_1}\Phi dW(r)\Phi dW(s_1)\Phi dW(s) = \tau^{3/2}\rbr{\frac16\rbr{\Phi\chi}^3 - \frac12\text{Tr}(\Phi D\chi\Phi^*)}
	\end{equation}
	The norm on $ \Phi\chi $ will be the standard norm on complex vector spaces. The definition of the noise and the Itô formula are given in \cite{MR3236753,MR2200233}. 
	
	The second type of examples that we will consider is the Manakov system given in Stratanovich form by
	\begin{equs}
		i \partial_t u(t,x) + \Delta u(t,x) + \lam u(t,x) \abs{u(t,x)}^2  + i\sqrt{\gamma}\sum_{j=1}^3\sigma_k\grad u(t,x) \circ dW_j(t) = 0,
	\end{equs}
	where the $ W_j(t) $ are three independent Brownian motions. Here the noise depends only on time not on space. Therefore, we do not consider an operator $ \Phi $ for smoothing the noise in space. This equation can be converted to Itô form as follows
	\begin{equs}
		i \partial_t u(t,x) + \Delta u(t,x) + \lam u(t,x) \abs{u(t,x)}^2  + i\sqrt{\gamma}\sum_{j=1}^3\sigma_k\grad u(t,x)  dW_j(t) = 0
	\end{equs}
	for $ C_\gamma = i+\frac{3\gamma}{2} $. 
	
	\subsection{Duhamel's formulation}\label{Duhamel_formulation}
	
	One can rewrite the equation \eqref{dis} using Duhamel's formulation, that is, 
	\begin{equation}\label{dis}
		\begin{aligned}
			u(t) & =
			e^{i t \mathcal{L}} v  -  i e^{i t \mathcal{L}  }  \int_0^t  e^{-i s \mathcal{L}  }  \left(\vert \nabla\vert^\alpha p\left(u(s), \overline u(s)\right) \right) ds  \\ & - i e^{i t \mathcal{L}  }  \int_0^t  e^{-i s \mathcal{L}  } \left(  \vert \nabla\vert^\beta  f(u(s,x),\overline u(s,x)) d W(s,x) \right).
		\end{aligned}
	\end{equation}
	In the case of cubic NLS with multiplicative noise, one has
	\begin{equs} \label{NLS_duhamel} \begin{aligned}
			u(t) &= e^{i t \Delta}v - ie^{i t \Delta}\int_0^t e^{-i s \Delta}u(s)\abs{u(s)}^2ds \\ &- ie^{i t \Delta}\int_0^t e^{-i  s \Delta} u(s) \Phi dW(s).
		\end{aligned}
	\end{equs}
	The last term in the previous expression is called the \textit{stochastic convolution}, in which we often omit the dependency in space of $ W(s) $ for notational brevity.
	One basic idea for building an approximation of the solution is to iterate this formulation.
	The first order iteration substitutes the integral form of the solutions $ u $ with the twisted variable $ e^{i t \lap}v $ to give
	\begin{equs}
		u(t) &= e^{i t \lap}v - ie^{i t \lap}\int_0^t e^{-is\lap}\rbr{e^{i s \lap}v}^2e^{-i s \lap}\bar{v}ds \\
		&- ie^{it\lap}\int_0^t e^{-i s\lap } \rbr{e^{i s \lap}v}\Phi dW(s) + R(t)
	\end{equs}
	where $ R(t) $ is  a remainder that is not of order $ \mathcal{O}(t^{\frac{3}{2}}) $ as some terms of order $ \mathcal{O}(t) $ are missing.
	The second order iteration will include all terms that scale linearly in time. For instance if we  iterate the stochastic convolution we obtain
	\begin{equs}
		\label{stochastic_iteration}
		e^{i t \Delta}\int_0^t e^{-is\Delta}\rbr{e^{i s\Delta  }\int_0^ s e^{-is_1\Delta}\rbr{e^{i s_1\Delta} v}\Phi dW( s_1) } \Phi dW( s)\sim t,
	\end{equs}
	which is of order $ \mathcal{O}(t) $ because $ dW(t) $ scales as $ \sqrt{t} $.
	\begin{remark}
		Our numerical schemes will be based upon truncations of the iterated mild form. To perform numerical approximations one is required to expand the operator $ e^{i s \lap} $ and thus we need regularity assumptions on the initial data, that is we need $ v \in H^{s+\alpha} $ where $ s > \max\{1,d/2\} $ and $ \alpha $ depends on the level of the Taylor expansion in the scheme and is the result of expanding the differential operators. The approach presented in this paper is intended to reduce the impact of Taylor expansions on the regularity of the scheme, reducing the value of $ \alpha $. In the sequel we will always assume additional regularity $ s $ in the schemes.
	\end{remark} 
	In order to improve significantly the regularity on the initial data, one wants to explode the resonances in these oscillatory integrals.
	For that, we start by rewriting the Duhamel's formulation in Fourier space:
	\begin{equs}
		u_k(t) & = e^{-i t k^2}v_k - \sum_{k=-k_1 + k_2 + k_3} ie^{-i t k^2}\int_0^t e^{i s k^2} \bar{u}_{k_1}(s) u_{k_2}(s) u_{k_3}(s) ds \\ &  - \sum_{k=k_1 +k_2} ie^{-i t k^2}\int_0^t e^{i  s k^2} u_{k_1}(s) \Phi_{k_2} dW_{k_2}(s)
	\end{equs}
	where $ u_k $ is the $k$th Fourier coefficient of the solution of \eqref{NLS_duhamel}. We have used the fact that $ e^{i t \Delta} $ is mapped in Fourier space to  $  e^{-i t k^2}$. Moreover, pointwise products in physical space are sent to convolution products in Fourier space. Then iterating this formulation, one gets the same iterated integrals but this time written in Fourier space. For example, \eqref{stochastic_iteration} becomes:
	\begin{equs}
		&	\sum_{k = k_1 + k_2 +k_3}	e^{-i t k^2}\int_0^t e^{isk^2} \\ 
		& \rbr{e^{-i s (k_1 + k_2)^2  }\int_0^ s e^{is_1 (k_1+k_2)^2}\rbr{e^{-i s_1 k_1^2} v_{k_1}}\Phi_{k_2} dW_{k_2}( s_1) } \Phi_{k_3} dW_{k_3}( s).
	\end{equs}
	
	\subsection{Decorated trees}
	For encoding the previous integrals, one can use an extended version of the decorated trees formalism introduced in \cite{B.S2022}. Let  $\Lab$ a finite set  and  frequencies $ k_1,...,k_m \in \Z^{d}$. 
	We suppose that $ \mathfrak{L} $  factorises into $ \mathfrak{L} = \mathfrak{L}_{+} \cup \mathfrak{L}_- $, 
	where $ \mathfrak{L}_+ $ parametrises a family of polynomials $ (P_{\Labhom})_{\Labhom \in \Lab_+} $ in the frequencies that represent operators with constant coefficients in Fourier space. 
	The other set $ \Lab_- $ encodes noises $ (\xi_{\mft })_{\mft \in \Lab_-} $ that we will use for the stochastic integration.
	\begin{definition}		\label{decorated_trees}
		We define the set of decorated trees $\mcT$ as elements of the form $T_\mfe^{\mfn,\mff}$ where 	
		\begin{itemize}	
			\item $ T $ is a non-planar rooted tree with root node $ \varrho $, edge set $ E_T $ and node set $ N_T $. We consider only planted trees which means that $ T $ has only one edge connecting its root $ \varrho $.		
			\item $ \mfe:E_T\rightarrow \mfL\times\set{0,1} $ is the set of edge decorations $ \mfe(\cdot) = (\mft(\cdot), \mfp(\cdot)) $ where the first component selects the correct polynomial $ P_{\mft} $ when $ \mft \in \Lab_+ $ or the correct noise $ \xi_{\mathfrak{l}} $ when $ \mathfrak{l} \in \Lab_- $. Edges decorated by elements of $ \Lab_- $ are terminal edges.	The second decoration encodes conjugation when it is equal to $ 1 $.
			\item $ \mfn:N_T\nin\set{\varrho }\rightarrow\mbbN $ are node decorations on the inner nodes of the trees encoding the indices for the time monomials $ s \mapsto  s^{\mfn(\cdot)}$.		
			\item $ \mff:N_T\nin\set{\varrho}\rightarrow \mathbb{Z}^d $ are node decorations  satisfying the relation for every inner node $ u $
			\begin{equs} \label{frequencies_identity}
				(-1)^{\mfp(e_u)}\mff(u)=\sum_{e = (u,v)\in E_T}(-1)^{\mfp(e)}\mff(v)
			\end{equs}
			where $ e_u$ is the edge outgoing $u$ of the form $ (w,u) $. From this definition, one
			can see that the node decorations  $(\mff(u))_{u \in L_T}$ determine the decorations of the inner nodes. We assume
			that the node decorations at the leaves are linear combinations of the $k_i$ with coefficients in $ \lbrace -1,0,1 \rbrace$.
		\end{itemize}
	\end{definition} 
	
	In the sequel, we will disregard some decorations on a tree $ T_\mfe^{\mfn,\mff} $: If we remove the node decorations, we will denote the tree as  $ T_\mfe $ and if it is only $ \mfn $, we denote the tree as $ T_\mfe^{\mff} $ (the corresponding set is $ \CT_0 $).
	We denote by $ H $  the (unordered) forests composed of trees in $ \CT $ (including the empty forest denoted by $ \one $). Their linear spans are denoted by $\CH$. Equipped with the forest product denoted by $ \cdot $ (union of two forests), $\CH$ is a commutative algebra.

	Before presenting the iterated integrals associated to these trees, we introduce a symbolic notation. An  edge decorated by $ o = (\mft, \mfp) $ with $ \mft \in \Lab_+ $ is denoted by $ \mathcal{I}_{o} $ otherwise it is denoted by $ \Xi_o $. The symbol $ \mathcal{I}_{o} (\lambda^{\ell}_{k}
	\cdot) : \CH \rightarrow \CH $ is viewed as the operation that merges all the roots of the trees composing the forest into one node decorated by $ (\ell,k) \in \mathbb{N} \times \mathbb{Z}^d$. We obtain
	a decorated tree which is then grafted onto a new root with no decoration. In the sequel, we will use the following shorthand notation: $ \lambda_k $ instead of $ \lambda_k^0 $. If the condition \eqref{frequencies_identity} is not
	satisfied on the argument, then $ \mathcal{I}_{o} (\lambda^{\ell}_{k}
	\cdot) $  gives zero. The edges of the type $ \Xi_{o} $ are always terminal edges and they are of the form $ \Xi_{o}(\lambda_k) $.
	Below, we illustrate the symbolic notation on various examples. We suppose that $ \mfL_+ = \set{\mft_1,\mft_2}$, $ \mfL_- = \set{\mfl }$ and  $P_{\mathfrak{t}_1}(\lambda) = -\lambda^2$ and $P_{\mathfrak{t}_2}(\lambda) = \lambda^2$. Then, one has
	\begin{equs}			
		T_1 = \begin{tikzpicture}[node distance = 14pt, baseline=7]				
			\node[inner] (root) at (0,0) {};			
			\node[inner] (inner4) [above of = root] {};			
			\node[leaf] (leaf1) [above left of = inner4] {$ k_1 $};		
			\node[leaf] (leaf2) [above of = inner4] {$ k_2 $};		
			\node[leaf] (leaf3) [above right of = inner4] {$ k_3 $};		
			\draw[int] (root.north) -- (inner4.south);
			\draw[positive] (inner4.north) -- (leaf2.south);
			\draw[conj] (inner4.north west) -- (leaf1.south east);
			\draw[positive] (inner4.north east) -- (leaf3.south west);		  
		\end{tikzpicture} =	  \CI_{(\mft_2,0)}\rbr{\lam_{k}\CI_{(\mft_1,1)}\rbr{\lam_{k_1}}\CI_{(\mft_1,0)}\rbr{\lam_{k_2}}\CI_{(\mft_1,0)}\rbr{\lam_{k_3}}} 
	\end{equs}
	where $ k = -k_1 + k_2 + k_3 $.
	The frequency decorations appear on the leaves of the previous tree. One does not have to make explicit the frequency decoration  for the inner nodes as they are determined by the ones coming from the leaves.  We have also omitted the forest product $ \cdot $ as $ \CI_{(\mft_1,1)}\rbr{\lam_{k_1}}\CI_{(\mft_1,0)}\rbr{\lam_{k_2}} $ is a shorthand notation for $ \CI_{(\mft_1,1)}\rbr{\lam_{k_1}} \cdot \CI_{(\mft_1,0)}\rbr{\lam_{k_2}} $. We have used the following pictorial dictionary:
	\begin{equs}
		\begin{tabular} { 
				| m{2.0em} | m{5.0em}| m{20.0em} |}
			\hline
			\textbf{Edges}  & \textbf{Notation} & \textbf{Operator} \\
			\hline
			\begin{tikzpicture}[node distance = 18pt, baseline=7]				
				\node[] (root) at (0,0) {};			
				\node[] (inner4) [above of = root] {};			
				\draw[positive] (root.north) -- (inner4.south);	  
			\end{tikzpicture} & $ \CI_{(\mathfrak{t}_1,0)}(\lambda_k \cdot)$ &  $e^{i t P_{\mathfrak{t_1}}(k)} =  e^{-i t k^2}$ \\
			\hline
			\begin{tikzpicture}[node distance = 18pt, baseline=7]				
				\node[] (root) at (0,0) {};			
				\node[] (inner4) [above of = root] {};			
				\draw[conj] (root.north) -- (inner4.south);	  
			\end{tikzpicture} & $\CI_{(\mathfrak{t}_1,1)}(\lambda_k \cdot)$  & $e^{-i t P_{\mathfrak{t_1}}(k)} = e^{i t k^2}$  \\
			\hline
			\begin{tikzpicture}[node distance = 18pt, baseline=7]				
				\node[] (root) at (0,0) {};			
				\node[] (inner4) [above of = root] {};			
				\draw[int] (root.north) -- (inner4.south);	  
			\end{tikzpicture}   & $\CI_{(\mathfrak{t}_2,0)}(\lambda_k \cdot)$  & $-i\int^{t}_0 e^{i s P_{\mathfrak{t}_2}(k)} \cdots d s = -i\int^{t}_0 e^{i s k^2} \cdots d s$  \\
			\hline
			\begin{tikzpicture}[node distance = 18pt, baseline=7]				
				\node[] (root) at (0,0) {};			
				\node[] (inner4) [above of = root] {};			
				\draw[conj-int] (root.north) -- (inner4.south);	  
			\end{tikzpicture}  & $\CI_{(\mathfrak{t}_2,1)}(\lambda_k \cdot)$  & $-i \int^{t}_0 e^{-i s P_{\mathfrak{t}_2}(k)} \cdots d s=  -i \int^{t}_0 e^{-i s k^2} \cdots d s$  \\
			\hline
		\end{tabular}
	\end{equs}
	The previous dictionary is valid when $ \CI_{o}(\lambda_k F) $ is such that the forest $ F $ does not contain $ \Xi_o $ among its trees.
	The decorated tree $ T_1 $ is an abstract version of the following integral:
	\begin{equs}
		- i e^{-i t k^{2}} \int^{t}_0 e^{i s k^2} e^{i s k_1^2}  e^{-i s k_2^2}  e^{-i s k_3^2}d s.
	\end{equs}
	Below, we give another example with their associated iterated integrals when one considers noises $ \Xi_{o}  $:
	\begin{equs}	
		T_2 =    \begin{tikzpicture}[node distance = 14pt, baseline=7]	
			\node[inner] (root) at (0,0) {};	
			\node[inner] (inner4) [above of = root] {};	
			\node[leaf] (leaf1) [above left of = inner4] {$ k_1 $};	
			\node[leaf] (leaf2) [above right of = inner4] {$ k_2 $};	
			\draw[int] (root.north) -- (inner4.south);	
			\draw[Phi] (inner4.north east) -- (leaf2.south west);	
			\draw[positive] (inner4.north west) -- (leaf1.south east);	 		
		\end{tikzpicture}  = \CI_{(\mft_2,0)}\rbr{\lam_{k} \CI_{(\mft_1,0)}\rbr{\lam_{k_1}} \Xi_{(\mfl,0)}(\lambda_{k_2})}    \equiv \int_{0}^{t}e^{is k^{2} }\left(e^{-i s k_{1}^{2} }\right)\Phi_{k_{2}}dW_{k_{2}}(s)
	\end{equs}
	where $ k = k_1 + k_2 $ and we have used a zigzag brown line for encoding the noise $ \Xi_{(\mfl,0)} $. The third example corresponds to the iteration of the stochastic integral within itself: 
	\begin{equs}
		T_3 & =  \begin{tikzpicture}[node distance = 14pt, baseline=14]
			\node[inner] (root) at (0,0) {};
			\node[inner] (inner1) [above of = root] {};
			\node[inner] (inner2) [above left of = inner1] {};
			\node[inner] (inner3) [above of = inner2] {};
			\node[leaf] (leaf1) [above left of = inner3] {$ k_1 $};
			\node[leaf] (leaf2) [above right of = inner3] {$ k_2 $};
			\node[leaf] (leaf3) [above right of = inner1] {$ k_3 $};
			\draw[int] (root.north) -- (inner1.south);
			\draw[positive] (inner1.north west) -- (inner2.south east);
			\draw[int] (inner2.north) -- (inner3.south);
			\draw[positive] (inner3.north west) -- (leaf1.south east);
			\draw[Phi] (inner3.north east) -- (leaf2.south west);
			\draw[Phi] (inner1.north east) -- (leaf3.south west); 
		\end{tikzpicture}  =  \CI_{(\mft_2,0)}\rbr{\lam_{k} \CI_{(\mft_1,0)}\rbr{\lam_{k_{12} } T_2} \Xi_{(\mfl,0)}(\lambda_{k_3})}
		\\
		& \equiv \int_0^t e^{ik^2s}\left(e^{-i s k_{12}^2}\int_0^s e^{i s_1 k_{12}^2}e^{-ik_1^2s_1}\Phi_{k_2}dW_{k_2}(s_1)\right)\Phi_{k_3}dW_{k_3}(s)
	\end{equs}
	where $ k = k_1 + k_2 + k_3 $ and $ k_{12} = k_1 + k_2 $.
	Our last example iterates the stochastic integral inside the deterministic one,
	\begin{equs}
		T_4 & = \begin{tikzpicture}[node distance = 14pt, baseline=7]				
			\node[inner] (root) at (0,0) {};			
			\node[inner] (inner4) [above of = root] {};		
			\node[inner] (inner5) [above of = inner4] {};		
			\node[inner] (inner6) [above of = inner5] {};		
			\node[leaf] (leaf1) [above left of = inner4] {$ k_3 $};		
			\node[leaf] (leaf2) [above right of = inner6] {$ k_2 $};		
			\node[leaf] (leaf4) [above left of = inner6] {$ k_1 $};
			\node[leaf] (leaf3) [above right of = inner4] {$ k_4 $};		
			\draw[int] (root.north) -- (inner4.south);
			\draw[positive] (inner4.north) -- (inner5.south);
			\draw[int] (inner5.north) -- (inner6.south);
			\draw[conj] (inner4.north west) -- (leaf1.south east);
			\draw[positive] (inner4.north east) -- (leaf3.south west);	
			\draw[Phi] (inner6.north east) -- (leaf2.south west);	
			\draw[positive] (inner6.north west) -- (leaf4.south east);			  
		\end{tikzpicture}  = \CI_{(\mft_2,0)}\rbr{\lam_{k}\CI_{(\mft_1,1)}\rbr{\lam_{k_3}}\CI_{(\mft_1,0)}\rbr{\lam_{k_{12}} T_2}\CI_{(\mft_1,0)}\rbr{\lam_{k_4}}} 
		\\ &  = 	- i e^{-i t k^{2}} \int^{t}_0 e^{i s k^2} e^{i s k_3^2}  e^{-i s k_{12}^2} \left( \int_{0}^{s}e^{i s_1 k_{12}^{2}}\left(e^{-i s_1 k_{1}^{2} }\right)\Phi_{k_{2}}dW_{k_{2}}(s_1) \right)  e^{-i s k_4^2}d s,
	\end{equs}
	where this time $ k = -k_1 + k_2 + k_3 + k_4 $. One can define the iterated integrals recursively based on the inductive construction of the decorated trees via a map $ \Pi :  \CH \rightarrow \mathcal{C} $. The space $  \mathcal{C} $ contains  functions of the form $ z \mapsto \sum_j Q_j(z) e^{i z P_j(k_1,...,k_n)} $  and the $ Q_j(z) $ are polynomials in
	$ z $, the $ P_j $ are polynomials in  $ k_1,...,k_n $, and each $ Q_j $ depends implicitly on $k_i$. The space $ \mathcal{C} $ contains also iterated integrals in the $ W_{k_i} $ with integrands depending on the the $ k_i $. 
	The map $ \Pi $ is defined for every forest $ \bar{F} $, and $ F $ not containing noises $ \Xi_o $, by
	\begin{equs}
		\label{def_Pi}
		\begin{aligned}
			\Pi (\mathcal{I}_{o_1}(\lambda^{\ell}_{k} \bar{F}))(t) &= e^{i t  P_{o_1}(k)} t^{\ell} (\Pi \bar{F})(t) \\
			\Pi (\mathcal{I}_{o_2}(\lambda^{\ell}_{k} F))(t) &= - i |\nabla|^\alpha(k)\int_0^t e^{i s P_{o_2}(k)} s^{\ell} \Pi(F)(s)d s \\
			\Pi (\mathcal{I}_{o_2}(\lambda^{\ell}_{k} F \Xi_o(\lambda_{\bar{k}})))(t) &= - i |\nabla|^\beta(k)\int_0^t e^{i s P_{o_2}(k)} s^{\ell}(\Pi F)(s) \Phi_{\bar{k}}d W_{\bar{k}}( s). 
		\end{aligned}
	\end{equs} 
	The iterated integrals come from Duhamel's formula in Fourier space, see \eqref{dis}.
	We have supposed that $ \Lab_+ $ split into two subsets $ \Lab_{1,+} $ and $  \Lab_{2,+}  $. 
	The set $  \Lab_{2,+}  $ corresponds to integrals in time and $ \Lab_{1,+} $ is just the propagator. We use the notation $ o_i =(\mathfrak{t}_i,\mathfrak{p}_i) $ for saying that $ \mathfrak{t}_i \in \Lab_{i,+} $ with $ i \in \lbrace 1,2 \rbrace $.
	We extend multiplicatively the definition of $ \Pi $ for the forest product and we set $ (\Pi\one)(t) = 1 $.
	
	\subsection{Approximation with a truncated tree series}
	
	Before showing that the previous iterated integrals can actually approximate the solution of \eqref{dis}. We compute their order in $ t $. Given a decorated tree  $ T_\mfe^{\mfn,\mff} $, we define the order of a tree denoted by $ \vert \cdot \vert_{\text{ord}} $
	by
	\begin{equation*}
		\vert  T_\mfe^{\mfn,\mff} \vert_{\text{ord}} = \sum_{e \in E_T }  \left( \one_{\lbrace \mathfrak{t}(e) \in \Lab_{2,+} \rbrace} + \frac{1}{2} \one_{\lbrace \mathfrak{t}(e) \in \Lab_{-} \rbrace} \right) + \sum_{v \in N_T \setminus \lbrace  \varrho  \rbrace} \mfn(v).
	\end{equation*} 
	In the definition of the order, the integrals in time give a contribution $  1 $ whereas the stochastic integrals give a contribution $ \frac{1}{2} $ which is due to the scaling of the Brownian motion. Then, one has 
	\begin{equation*}
		\left( \Pi  T_\mfe^{\mfn,\mff} \right)(t) = \mathcal{O}(t^{\vert  T_\mfe^{\mfn,\mff} \vert_{\text{ord}}}).
	\end{equation*}
	For describing the solution $ u_k $ in the Duhamel's formula in Fourier space, one needs to select the correct decorated trees. One way to do it is to describe them as decorated trees generated by rules coming from the nonlinearity of the equation.
	
	A rule is then a map $R$ assigning to each element of $ \Lab \times \lbrace 0,1 \rbrace $ a nonempty collection of tuples in $ \Lab \times \lbrace 0,1 \rbrace $. The relevant rule describing the equation \eqref{dis} when $ p(u,\bar{u}) = u^N \bar{u}^M $ and  $f(u,\overline u) = u $ is given by 
	\begin{equs}
		R((\mathfrak{l},p)) & = \lbrace ()\rbrace, \quad 
		R((\mathfrak{t}_1,p))  = \lbrace (), ( (\mathfrak{t}_2, p)) \rbrace
		\\
		R((\mathfrak{t}_2,p)) & = 	\lbrace ((\mathfrak{t}_1,p)^N, (\mathfrak{t}_1, p+1)^M ), ((\mathfrak{t}_1,p), (\mathfrak{l},p)) \rbrace,
	\end{equs}
	where $ (\mathfrak{t}_1,p)^N $ means that $ (\mathfrak{t}_1,p) $ is repeated $ N $ times.
	We recall the definition of a decorated tree generated by a rule $ R $.

	\begin{definition}
		A decorated tree $ T_\mfe^\mff\in \mcT_0 $ is generated by the rule $ R $ if for every node $ u\in N_T $ one has 
		\begin{equation}
			\cup_{e\in E_u}\rbr{\mft(e),\mfp(e)}\in R(e_u)
		\end{equation}
		where $ E_u \subset E_T$ are the edges of the form $ (u,v) $ and $ e_u $ is the edge of the form $ w,u $. The set of decorated trees generated by $ R $ is denoted by $ \mcT_0(R) $.
	\end{definition}
	
	Alternatively, one can describe the set $ \mcT_0(R) $ inductively by:
	\begin{equs}
		& \CT_0(R)  = \lbrace \CI_{(\Labhom_1,0)}( \lambda_k \CI_{(\Labhom_2,0)}( \lambda_k   \prod_{i=1}^N T_i \prod_{j=1}^M  \tilde T_j  ) ), \CI_{(\Labhom_1,0)}(\lambda_k) \,  \\ & \quad \CI_{(\Labhom_1,0)}( \lambda_k \CI_{(\Labhom_2,0)}( \lambda_k T \, \Xi_{(\mathfrak{l},0)}(\lambda_{\bar{k}}) ) ): \,T_i, T \in \CT_0(R), \, \tilde{T}_j \in \bar{\CT}_0(R), \, k, \bar{k} \in \Z^{d}  \rbrace \\
		& \bar{\CT}_0(R)  = \lbrace \CI_{(\Labhom_1,1)}( \lambda_k \CI_{(\Labhom_2,1)}( \lambda_k   \prod_{i=1}^N T_i \prod_{j=1}^M  \tilde T_j  ) ), \CI_{(\Labhom_1,1)}(\lambda_k) \,  \\ & \quad \CI_{(\Labhom_1,1)}( \lambda_k \CI_{(\Labhom_2,1)}( \lambda_k T \, \Xi_{(\mathfrak{l},1)}(\lambda_{\bar{k}}) ) ): \,T_i, T \in \bar{\CT}_0(R), \, \tilde{T}_j \in \CT_0(R), \, k, \bar{k} \in \Z^{d}  \rbrace
	\end{equs}
	We define $   \CT_0^k(R)  $ as the subspace of $ \CT_0(R) $ such that the frequency decoration on the nodes connected to the root is $ k $. For $ r\in\mathbb{N} $ we set
	\begin{equation}
		\mcT^{r,k}_0\rbr{R}=\set{T_\mfe^\mff\in \mcT^k_0\rbr{R},|T^\mff_\mfe|_{\text{ord}}\leq r}
	\end{equation}
	which corresponds to the trees of order in $ t $ less than $ r $.
	We will now introduce the elementary differential, which is required to present the expansion of the solution in terms of iterated integrals. We start by defining the symmetry factor of the tree $ T^{\mfn,\mff}_{\mfe}\in\mcT $ by considering only the edge decoration, i.e, setting $ T^{\mfn,\mff}_\mfe := T_\mfe $ and then setting $ S(\boldsymbol{1}) = 1 $ and working inductively defining 
	\begin{equation}
		S(T):=\rbr{\prod_{i,j}S(T_{i,j})^{\gamma_{i,j}}\gamma_{i,j}!}
	\end{equation}
	for the tree $ \prod_{i,j}\CI_{(\mft_{t_i}, p_i)}\rbr{T_{i,j}}^{\gamma_{i,j}} $ where $T_{i,j} \neq T_{i,\ell}$ for $j \neq \ell$.
	We now define the elementary differentials denoted by $ \Upsilon(T)(v) $ for 
	\begin{equs}
		T  & = 
		\CI_{(\Labhom_1,a)}\left( \lambda_k \CI_{(\Labhom_2,a)}( \lambda_k   \prod_{i=1}^n \CI_{(\Labhom_1,0)}( \lambda_{k_i} T_i) \prod_{j=1}^m \CI_{(\Labhom_1,1)}( \lambda_{\tilde k_j} \tilde T_j)  ) \right), \quad a \in \lbrace 0,1 \rbrace  
		\\ 	\bar{T}  & = 
		\CI_{(\Labhom_1,a)}\left( \lambda_k \CI_{(\Labhom_2,a)}( \lambda_k   \prod_{i=1}^n \CI_{(\Labhom_1,0)}( \lambda_{k_i} T_i) \prod_{j=1}^m \CI_{(\Labhom_1,1)}( \lambda_{\tilde k_j} \tilde T_j)  \Xi_{(\mathfrak{l},a)}(\lambda_{\bar{k}})) \right), 
	\end{equs}
	by
	\begin{equs}
		\Upsilon(T)(v) \, & { :=}  \partial_v^{n} \partial_{\bar v}^{m} p_a(v,\bar v) \prod_{i=1}^n  \Upsilon( \CI_{(\Labhom_1,0)}\left( \lambda_{k_i}  T_i \right) )(v)  \prod_{j=1}^m \Upsilon( \CI_{(\Labhom_1,1)}( \lambda_{\tilde k_j}\tilde T_j ) )(v)
		\\ \Upsilon(T)(v) \, & { :=}  \partial_v^{n} \partial_{\bar v}^{m} f_a(v,\bar v) \prod_{i=1}^n  \Upsilon( \CI_{(\Labhom_1,0)}\left( \lambda_{k_i}  T_i \right) )(v)  \prod_{j=1}^m \Upsilon( \CI_{(\Labhom_1,1)}( \lambda_{\tilde k_j}\tilde T_j ) )(v),
	\end{equs}
	where 
	\begin{equs} 
		\label{mid_point_rule}
		\begin{aligned}
			\Upsilon(\CI_{(\Labhom_1,0)}( \lambda_{k})  )(v)  \,  { :=}  v_k, \quad 
			\Upsilon(\CI_{(\Labhom_1,1)}( \lambda_{k})  )(v)  \, & { :=}  \bar{v}_k.
		\end{aligned}
	\end{equs}
	Above, we have used the notation:
	\begin{equs}
		p_0(v,\bar{v}) = p(v,\bar{v}), \quad  f_0(v,\bar{v}) = f(v,\bar{v}),\quad p_{1}(v,\bar{v}) = \overline{p(v, \bar{v})}, \quad f_{1}(v,\bar{v}) = \overline{f(v, \bar{v})}. 
	\end{equs}
	\begin{proposition} \label{tree_series}
		The tree series given by 
		\begin{equs}\label{genscheme}
			U_{k}^{r}(t, v) =   \sum_{T \in \CT^{r,k}_{0}(R)} \frac{\Upsilon( T)(v)}{S(T)} (\Pi   T )(t)
		\end{equs}
		is the $k$th Fourier coefficient of a solution of $ \eqref{dis} $ up to order $ r $ which means that one has
		\begin{equs}
			u_{k}(t) - 	U_{k}^{r}(t, v) = \mathcal{O}(t^{r+\frac{1}{2}}).
		\end{equs}
	\end{proposition}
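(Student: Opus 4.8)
The plan is to argue by induction on the order parameter $r$, which we allow to range over $\tfrac12\mathbb{N}$ (setting $\mcT^{\rho,k}_0(R)=\{T^{\mff}_\mfe\in\mcT^k_0(R):|T^\mff_\mfe|_{\text{ord}}\le\rho\}$ and $U^\rho_k(t,v)$ accordingly for every half-integer $\rho$, with $U^\rho\equiv0$ for $\rho<0$), and proving $u_k(t)-U^\rho_k(t,v)=\mathcal{O}(t^{\rho+1/2})$ for all $\rho\in\tfrac12\mathbb{N}$; for integer $\rho$ this is the statement. The engine of the induction is the Fourier-space form of Duhamel's formula for \eqref{NLS_duhamel}, which upgrades an approximation of order $\rho'<\rho$ into one of order $\rho$, and the estimates for the stochastic pieces come from the It\^o isometry (or the Burkholder--Davis--Gundy inequality if higher-moment or pathwise control is wanted). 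We give the argument for the cubic stochastic NLS \eqref{NLS_duhamel}; the general equation \eqref{dis} has the same structure, the polynomial $p$ and the linear $f$ only changing the combinatorics of the grafting of subtrees, not the estimates. All estimates $\mathcal{O}(t^a)$ below are in $L^2(\Omega;H^\sigma)$ with $\sigma$ fixed large enough to absorb the operators $|\nabla|^\alpha,|\nabla|^\beta$ produced by the Taylor expansions, so we prove in fact the stronger statement that the error is $\mathcal{O}(t^{\rho+1/2})$ in that norm. Under the standing hypotheses (almost local well-posedness, $v\in H^{s+\alpha}$ with $s>\max\{1,d/2\}$, $\Phi$ Hilbert--Schmidt with the trace bounds stated in Section~\ref{Sec::2}) all frequency sums converge and $\|u(s)\|_{L^2(\Omega;H^\sigma)}$ and the corresponding norms of $U^\rho(s,v)$ stay bounded uniformly for $s\in[0,t]$, $t$ small; these a~priori bounds and the algebra property of $H^\sigma$ are used below without comment.

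\textbf{The combinatorial identity.}
The non-analytic heart of the argument is the identity expressing that the truncated tree series solves the Duhamel iteration: $U_k^{\rho}(t,v)$ equals the right-hand side of the Fourier-space Duhamel formula for \eqref{NLS_duhamel} in which every factor $u_{k_i}(s)$ inside the cubic integral is replaced by $U_{k_i}^{\rho-1}(s,v)$, the factor $u_{k_1}(s)$ inside the stochastic integral is replaced by $U_{k_1}^{\rho-1/2}(s,v)$, and, for the cubic integral only, all contributions of trees of order $>\rho$ arising from the expansion of the product are discarded. This is precisely the assertion that the decorated-tree Butcher series $\sum_{T}\tfrac{\Upsilon(T)(v)}{S(T)}(\Pi T)(t)$ solves the Duhamel fixed-point equation: the outermost edge $\mathcal{I}_{(\mathfrak{t}_1,0)}$ contributes the prefactor $e^{-itk^2}$, the edge $\mathcal{I}_{(\mathfrak{t}_2,0)}$ contributes $-i\int_0^t e^{isk^2}(\cdot)\,ds$ in the deterministic case and the stochastic integral $-i\int_0^t e^{isk^2}(\cdot)\,\Phi_{\bar k}\,dW_{\bar k}(s)$ when a noise leaf is attached, the subtrees hanging above it range independently over $\CT_0(R)$ (over $\bar{\CT}_0(R)$ in the conjugated slots), and the recursive definitions of $\Pi$, $\Upsilon$ and $S$ make the iterated integral, the elementary differential and the symmetry factor of the grafted tree factorise through those of its pieces. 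For the deterministic trees this is exactly the identity established in \cite{B.S2022}; the only new ingredient is the noise leaf $\Xi_{(\mathfrak{l},0)}$, which enters through the rule $R((\mathfrak{t}_2,p))\ni((\mathfrak{t}_1,p),(\mathfrak{l},p))$ and behaves like an extra terminal factor carrying $\Phi_{\bar k}\,dW_{\bar k}(s)$, invisible to $\Upsilon$. The truncation orders are forced by order arithmetic: a deterministic grafting produces a tree of order $1+\sum(\text{orders of the subtrees})$, so retaining all of order $\le\rho$ means drawing the subtrees from $\CT_0^{\rho-1}(R)$, whereas a stochastic grafting produces a tree of order $\tfrac12+(\text{order of the subtree})$ (the scaling of the Wiener increment), so $\CT_0^{\rho-1/2}(R)$ is the right source and no further cut is needed on the stochastic term.

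\textbf{Base case and induction.}
For $\rho=0$ one has $\CT_0^{0,k}(R)=\{\mathcal{I}_{(\mathfrak{t}_1,0)}(\lambda_k)\}$, so $U_k^0(t,v)=e^{-itk^2}v_k$; subtracting this from the Fourier-space Duhamel formula leaves the cubic convolution integral, which is $\mathcal{O}(t)$, together with the stochastic convolution, which by the It\^o isometry and the uniform bound on $u$ is $\mathcal{O}(t^{1/2})$, whence $u_k(t)-U_k^0(t,v)=\mathcal{O}(t^{1/2})$. For the inductive step (well-founded since it invokes the orders $\rho-\tfrac12$ and $\rho-1$, both strictly below $\rho$), subtract the combinatorial identity from the Fourier-space Duhamel formula for $u_k$. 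The deterministic part contributes, first, the difference of the cubic integrands with $u$ and with $U^{\rho-1}$, which by trilinearity, the $H^\sigma$ a~priori bounds and the induction hypothesis $u-U^{\rho-1}=\mathcal{O}(s^{\rho-1/2})$ is $\mathcal{O}(s^{\rho-1/2})$, hence $\mathcal{O}(t^{\rho+1/2})$ after integrating in $s$; and, second, the discarded tail, which consists only of trees of order $\ge\rho+\tfrac12$ and is therefore $\mathcal{O}(t^{\rho+1/2})$. The stochastic part contributes the difference of the integrands with $u$ and with $U^{\rho-1/2}$, which by the induction hypothesis $u-U^{\rho-1/2}=\mathcal{O}(s^{\rho})$ and the It\^o isometry (with the Hilbert--Schmidt bound on $\Phi$) is $\mathcal{O}(t^{\rho+1/2})$, with no tail to discard. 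Summing, $u_k(t)-U_k^{\rho}(t,v)=\mathcal{O}(t^{\rho+1/2})$, closing the induction.

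\textbf{Main obstacle.}
The delicate point is the combinatorial identity of the second paragraph. One must verify that substituting the decorated-tree series into the Fourier Duhamel map reproduces the series, with $S(T)$ exactly cancelling the multiplicities created when several identical subtrees, possibly with one of them adjacent to a noise leaf, are grafted at a common node, and with the conjugation decoration $\mathfrak{p}$ correctly tracking which arguments of $p$ (respectively $\overline p$, $f$, $\overline f$) receive $u$ and which receive $\overline u$. The purely deterministic version of this verification is the one carried out in \cite{B.S2022}; the genuinely new content here is checking that the stochastic edges do not disturb it and that the half-integer order bookkeeping is consistent with the scaling $\int_0^t(\cdot)\,dW(s)=\mathcal{O}(t^{1/2}\sup|\cdot|)$ used in the estimates. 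Granting this, the analytic part is routine, resting only on the It\^o isometry, the $H^\sigma$-algebra property, and the standing regularity and trace hypotheses.
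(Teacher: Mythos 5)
Your proposal is correct and follows essentially the same route as the paper, which simply defers to the inductive Duhamel/decorated-tree argument of \cite[Prop.~4.3]{B.S2022} and notes that the only new ingredient is the $t^{1/2}$ scaling of the stochastic integrals. Your writeup fills in exactly the details the paper leaves implicit — the combinatorial identity that the truncated series solves the Duhamel iteration, the half-integer order bookkeeping, and the It\^o-isometry estimate for the noise terms — without deviating from that strategy.
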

	\begin{proof}
		The proof follows the same lines as the one given in \cite[Prop. 4.3]{B.S2022}. The only difference is the fact that stochastic integrals scale differently as $ t^{\frac{1}{2}} $.
	\end{proof}
	
	One wants to discretise the expansion $ U^r_k(t,v) $ by discretising each $ (\Pi T)(t) $ while accounting for the resonance within the discretisation. The idea is to minimise the regularity asked on the initial data $ v $ and it turns out that it is possible to do this for lower order schemes but not for high order ones. This is the subject of the next theorem:

	\begin{theorem} \label{main_theorem_general} 
		Assuming that the initial data $ v $ is in some $ H^n $, one can find a resonance discretisation $ U^{n,r}_k(t,v) $ in the flavour of \cite{B.S2022} such that 
		\[
		U^{n,r}_k(t,v) - u_k(t) =\begin{cases}\sum_{T\in\mcT_0^{r,k}}\mcO\rbr{t^{r+1/2}\msL^r_{\text{\tiny{low}}}\rbr{T,n}\Ups\rbr{T}(v)}\quad r \leq 1 \\ 
			\sum_{T\in\mcT_0^{r,k}}\mcO\rbr{t^{r+1/2}\msL^{r-1}\Ups\rbr{T}(v)}\quad r > 1
		\end{cases} \]
		where $ \msL^r_{\text{\tiny{low}}}(T,n) $ is given as in \cite[Def. 3.11]{B.S2022}. 
	\end{theorem}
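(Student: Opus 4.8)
The plan is to start from Proposition~\ref{tree_series}, which already gives $u_k(t) - U^r_k(t,v) = \mcO(t^{r+1/2})$, so that it suffices to produce, for each tree $T$ in the \emph{finite} set $\CT^{r,k}_0(R)$, a discretisation $(\Pi^{n,r}T)(t)$ of the iterated integral $(\Pi T)(t)$ and to bound the local error $(\Pi T)(t) - (\Pi^{n,r}T)(t)$ by $\mcO(t^{r+1/2})$ times the claimed regularity factor; summing over $\CT^{r,k}_0(R)$ against $\Ups(T)(v)/S(T)$ then assembles $U^{n,r}_k$ and its error. I would define $(\Pi^{n,r}T)(t)$ by induction following the recursion~\eqref{def_Pi}: every $(\Pi T)(t)$ is built by alternately applying the deterministic (``resonance'') operation $F \mapsto -i|\nabla|^\alpha(k)\int_0^t e^{isP}s^\ell (\Pi F)(s)\,ds$ and the stochastic operation $F \mapsto -i|\nabla|^\beta(k)\int_0^t e^{isP}s^\ell (\Pi F)(s)\Phi_{\bar k}\,dW_{\bar k}(s)$, and I would discretise each occurrence separately, carrying the running order via $|\cdot|_{\text{ord}}$ and truncating at the moment a term's order would exceed $r$. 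Note that the fixed operators $|\nabla|^\alpha$, $|\nabla|^\beta$ already impose an $r$-independent number of derivatives on $v$ (this is the source of the initial-data regularity in the Manakov system, where the noise carries a gradient); these are accounted for once and for all inside $\msL^r_{\text{\tiny{low}}}(T,n)$.

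For a deterministic node I would apply the dominant/lower splitting $P = P_{\text{\tiny{dom}}} + P_{\text{\tiny{low}}}$ of \cite{B.S2022}: the factor $e^{isP_{\text{\tiny{dom}}}}$ times $s^\ell$ times the already-discretised inner twisted monomials integrates in closed form, while $e^{isP_{\text{\tiny{low}}}} = 1 + isP_{\text{\tiny{low}}} + \cdots$ is Taylor-expanded, each dropped term gaining one power of $t$ and one power of $P_{\text{\tiny{low}}}$ — exactly the mechanism generating the operator count $\msL^r_{\text{\tiny{low}}}(T,n)$ of \cite[Def.~3.11]{B.S2022}. For a stochastic node, since there is no closed form for $\int_0^t e^{isP_{\text{\tiny{dom}}}}\Phi_{\bar k}\,dW_{\bar k}(s)$, I would instead \emph{fully} Taylor-expand $e^{isP} = 1 + \mcO(sP)$: the leading term $-i|\nabla|^\beta(k)\int_0^t s^\ell(\Pi F)(s)\Phi_{\bar k}\,dW_{\bar k}(s)$ is computed exactly via the It\^o-moment identities~\eqref{eq:discrete_W}, \eqref{eq:discrete_W^2} and their triple-integral analogue, while each discarded term carries a spare $t^{1/2}$ and a power of $P$ (that is, of $\lap$ or $|\nabla|$). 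The decisive point is to control \emph{where} that power of $P$ falls: on $\Phi$ it is absorbed by the smoothness assumptions on $\Phi$ (the finite-trace hypotheses $\mathrm{Tr}(\Phi\Phi^*)$, $\mathrm{Tr}((\lap\Phi)^2) < \infty$, and so on) at no cost to the $H^n$-budget, whereas on a twisted variable $e^{is\lap}v$ it costs genuine derivatives on $v$.

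I would then split into the two regimes. For $r \le 1$ the target precision is only $\mcO(t^{r+1/2}) \le \mcO(t^{3/2})$, and I would check, directly from $|T|_{\text{ord}} \le 1$, that every relevant tree has a shallow enough nesting of stochastic edges that a single Taylor step per stochastic node already produces a remainder of order $t^{3/2}$ whose lone power of $P$ can be placed on the noise coefficient; hence the entire $H^n$-requirement comes from the deterministic resonance part, giving the factor $\msL^r_{\text{\tiny{low}}}(T,n)$. This is precisely what is verified concretely for the Schr\"odinger schemes~\ref{scheme:schrodinger_O1/2}, \ref{scheme:schrodinger_O1} and the Manakov scheme~\ref{scheme:manakov_O1/2} in Propositions~\ref{expansion_scheme_NLS}, \ref{prop:schrodinger_O1/2_local}, \ref{Manakov_local_error}, and the general case follows by the same induction since NLS already displays all the tree shapes involved. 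For $r > 1$, reaching $\mcO(t^{r+1/2})$ forces expansion of the stochastic integrals to order $\lfloor r\rfloor$ or beyond, and the resulting powers of $P$ unavoidably land on nested twisted variables, for which resonance splitting is unavailable; together with the ordinary expansion of the deterministic integrals at the matching order this needs $\lfloor r\rfloor - 1$ derivatives on $v$, i.e. the factor $\msL^{\lfloor r\rfloor-1}$, with no resonance gain left — the sharpness of this loss being the content of Proposition~\ref{counter_example}.

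The hard part will be the inductive bookkeeping that makes the dichotomy at $r=1$ precise: one must show that in the regime $r \le 1$ no Taylor remainder of a \emph{nested} stochastic integral ever consumes $v$-regularity, and one must check that for mixed trees of type $T_4$ — a stochastic integral sitting inside a deterministic one — the outer dominant-part integration is compatible with the stochastic discretisation of the subtree, so that the count $\msL^r_{\text{\tiny{low}}}(T,n)$ and the $\Phi$-traces remain separated. Since $\msL^r_{\text{\tiny{low}}}(T,n)$ is itself defined recursively over subtrees, this is an induction mirroring the recursion~\eqref{def_Pi}, with base cases furnished by~\eqref{mid_point_rule} and the discrete-noise formulas above; I expect the NLS computation of Section~\ref{Sec::3} to supply the template for the inductive step.
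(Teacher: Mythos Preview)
Your proposal is essentially correct and mirrors the paper's own strategy: the paper does not give a self-contained general proof of Theorem~\ref{main_theorem_general} either, but instead (i) reduces to per-tree discretisation via Proposition~\ref{tree_series}, (ii) for $r\le 1$ verifies the claim on the NLS and Manakov examples (Propositions~\ref{expansion_scheme_NLS}, \ref{prop:schrodinger_O1/2_local}, \ref{Manakov_local_error}, Theorem~\ref{scheme_order_2}) and asserts the general case is similar, and (iii) for $r>1$ exhibits the obstruction $T_4$ in Proposition~\ref{counter_example}. Your inductive outline is in fact more systematic than what the paper writes down.

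One technical point to correct: your claim that for $r\le 1$ the stochastic remainder's ``lone power of $P$ can be placed on the noise coefficient'' so that ``the entire $H^n$-requirement comes from the deterministic resonance part'' is not literally true. For $T_2$ the paper computes the error as $\mcO(t^{3/2}(k_2^2+2k_1k_2)\Phi_{k_2})$, and the cross term $k_1k_2$ puts one derivative on $v$ (the $k_1$ factor), not only on $\Phi$. The conclusion you want still holds, but for a different reason: this single derivative on $v$ coming from the stochastic Taylor step is \emph{no worse than} the derivative already demanded by the deterministic resonance term $P_{\text{\tiny{low}}}$ of $T_1$, so it is absorbed into $\msL^r_{\text{\tiny{low}}}(T,n)$ rather than being absent. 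You should phrase the $r\le 1$ argument as ``the stochastic remainder costs at most the same $v$-regularity as the resonance analysis'' rather than ``costs none''.
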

	
	\begin{remark}
		One can observe that the class of schemes introduced in Theorem~\ref{main_theorem_general} requires the same regularity as classical schemes for an order $ r > 1 $. This is due to particular iterated integrals for which one cannot evaluate the resonance integral explicitly due to the presence of a stochastic term. We will describe this obstruction in section~\ref{Sec::3} and provide an iterated integral in Proposition~\ref{counter_example} with the given local error. However, resonance schemes \textit{are} effective at orders lower than $ r\leq1 $. We present several examples demonstrating this result, such as the NLS with multiplicative noise and the Manakov system. We refrain from recalling the definition of $  \msL^r_{\text{\tiny{low}}}(T,n) $, as the number of iterated integrals is small when $ r\leq 1 $ and the proof will be similar to that for the stochastic NLS with multiplicative noise.
	\end{remark}

	\section{Deriving low regularity schemes}
	\label{Sec::3}
	In this section, we describe the derivation of the schemes in general using Schr\"odinger's equation as an example. We show that by iterating the mild form of the equation and performing resonance analysis, we can lower the order of the differential operator acting on the initial conditions. Subsequently by Taylor expansion we can approximate the iterated integrals to produce numerical schemes. We will demonstrate the process for the nonlinear term and the stochastic term. We will show why this process fails at higher orders using an order $ \mcO(t^{3/2}) $ mixed (stochastic and deterministic) integral, proving the high regularity stated in Theorem~\ref{main_theorem_general} when $ r > 1 $.
	
	\subsection{Main ideas of the resonance approach}
	To obtain the schemes we recall the expansion obtained from Duhamel's formulation in Fourier space at order $ \mcO(t^{3/2}) $: 
	\begin{equs}
		\label{picard32}
		\begin{aligned}
			u(t)& =e^{it\Delta}v-i e^{i t \Delta}\int_{0}^{t}e^{-is\Delta}\left(e^{-is\Delta}\bar{v}\right)\left(e^{is\Delta}v\right)\left(e^{is\Delta}v\right)d s \\
			& -i e^{i t \Delta}\int_{0}^{t}e^{-is\Delta}\left(e^{is\Delta}v\right)\Phi dW(s)\notag\\
			& - e^{i t \Delta}\int_{0}^{t}e^{-is\Delta}\left(e^{is\Delta}\int_{0}^{s}e^{-i\Delta s_{1}}\left(e^{i\Delta s_{1}}v\right)\Phi dW( s_{1})\right)dW( s)\notag \\
			& +R(t).\notag
		\end{aligned}
	\end{equs} 
	where the remainder $ R(t) $ is of order $ \mathcal{O}(t^{3/2}) $ and $ v=u(0) $ is the initial data. The numerical scheme is derived by discretising the terms in \eqref{picard32}. First we will consider the nonlinear term in \eqref{picard32}. To do this we use the Fourier transform of the initial data $ v=\sum_{k\in\mbbZ^d}e^{ikx}v_k $. Substituting this into \eqref{picard32}, the nonlinear term becomes, for $ k=-k_1+k_2+k_3 $
	\begin{equs}
		\int_{0}^{t}e^{-is\Delta}\left(e^{-is\Delta}\bar{v}\right)\left(e^{is\Delta}v\right)\left(e^{is\Delta}v\right)d s & = \sum_{k = -k_1 + k_2 + k_3}e^{ikx}\overline{v}_{k_1}v_{k_2}v_{k_3}
		\\ &	\int_{0}^{t}e^{i s P(k_1,k_2,k_3)}ds,
	\end{equs}
	where $P(k_1,k_2,k_3)=2k_1^2-2k_1(k_2+k_3)+2k_2k_3$ is the resonance structure of the iterated integral. In the sequel, we will use the shorthand notation $ P $ instead of $ P(k_1,k_2,k_3) $. We note that the powers of the $k_i$ correspond to the orders of the differential operator in space, while the subscripts link the operator to the function it acts upon. In the function $ P $ for instance $ k_i^2\mapsto\lap v $ while $ k_ik_j\mapsto (\grad v)^2 $ for $i\neq j$, and $ i,j\in\set{1,2,3} $. Hence all terms in $ P $ correspond to first order differential operators, apart from $ 2k_1^2 $, and the power of the resonance approach is our ability to write the integral in closed form, i.e. \begin{equs} \label{discretisation_T_1}
		\int_0^{t}e^{2ik_1^2 s}d s= \frac{e^{2ik_1^2 t}-1}{2ik_1^2}. 
	\end{equs}
	The remaining part of the operator, $ e^{isP} $, can be discretised by Taylor expansion of the operator $ e^{i s \mathcal{L}_{\text{low}}}=e^{is\rbr{P- 2k_1^2}} $ without the higher order differential operators acting on the initial conditions. 
	One connects this term with the tree formalism introduced before. Indeed, one has 
	\begin{equs}\label{tree_nonlinear}
		T_1 =  \begin{tikzpicture}[node distance = 14pt, baseline=7]				
			\node[inner] (root) at (0,0) {};			
			\node[inner] (inner4) [above of = root] {};			
			\node[leaf] (leaf1) [above left of = inner4] {$ k_1 $};		
			\node[leaf] (leaf2) [above of = inner4] {$ k_2 $};		
			\node[leaf] (leaf3) [above right of = inner4] {$ k_3 $};		
			\draw[int] (root.north) -- (inner4.south);
			\draw[positive] (inner4.north) -- (leaf2.south);
			\draw[conj] (inner4.north west) -- (leaf1.south east);
			\draw[positive] (inner4.north east) -- (leaf3.south west);		  
		\end{tikzpicture}, \quad	(\Pi T_1)(t)= -i \int_{0}^{t}e^{i s P}ds,
	\end{equs}
	and with a quick computation, one also realises that
	\begin{equs}
		S(T_1) = 2, \quad \Upsilon(T_1)(v) = 2\overline{v}_{k_1}v_{k_2}v_{k_2}.
	\end{equs}
	One obtains a factor $ 2 $ from the symmetry factor because we do not consider the frequency decorations in its definition. One can set from \eqref{discretisation_T_1}
	\begin{equs}
		(\Pi^{n,r} T_1 )(t) = - \frac{e^{2ik_1^2 t}-1}{2k_1^2}
	\end{equs}
	with $ n =1 $ and $ r = 1 $ coming from the fact that we need $ v \in H^1 $ for making sense of this discretisation.

	The discretisation of stochastic terms is slightly different. We perform the same operations but the noise will have it's own Fourier coefficients coupling it with the smoothing operator $ \Phi $.  For example, if we consider the first stochastic integral in \eqref{picard32} and apply the Fourier transform to the initial data and the white noise then we have 
	\begin{equation}
		\int_{0}^{t}e^{-is\Delta}\left(e^{is\Delta}v\right)\Phi dW( s)=\sum_{k = k_1 + k_2}e^{ik x}\Phi_{k_2} v_{k_1}\int_{0}^{t} e^{i s(k_2^2+2k_1k_2)} dW_{k_2}( s).
	\end{equation}
	At this point, if we proceed just as we did for the deterministic integral then we would note that, by Taylor expansion, we have 
	\begin{equation}\label{eq:stoch-taylor_exp}
		\int_{0}^{t} e^{i s(k_2^2+ 2k_1k_2)} dW_{k_2}( s) = \int_{0}^{t} e^{i s k_2^2}(1+\mcO( s k_1k_2)) dW_{k_2}( s).
	\end{equation}
	Then, we would like to solve the integral 
	\begin{equation}\label{eq:stoch_integral}
		\int_{0}^{t} e^{i s k_2^2} dW_{k_2}( s),
	\end{equation} 
	and since \[ \int_{0}^{t} \mcO( s k_1k_2) dW_{k_2}( s) \sim \mcO(t^{3/2} k_1 k_2),\]
	this would be the leading error term for this approximation. However, there is no path-wise solution for \eqref{eq:stoch_integral} and as such we must Taylor expand the entire operator, which gives, 
	\begin{equs}
		\label{discretisation_T_2}
		\begin{aligned}
			\int_{0}^{t} e^{i s(k_2^2+ 2k_1k_2)} dW_{k_2}( s) &=\int_{0}^{t} \left( 1+\mcO( s k_1k_2^2) \right) dW_{k_2}( s)\\
			&=W_{k_2}(t)-W_{k_2}(0 ) + \mcO(t^{3/2}k_1k_2^2).
		\end{aligned}
	\end{equs}

	Plugging this back into \eqref{eq:stoch-taylor_exp} and rewriting into physical space gives us the identity
	\begin{equation}
		\int_{0}^{t}e^{-is\Delta}\left(e^{is\Delta}v\right)\Phi(dW(s))= \Phi(W(t))-\Phi(W(0)) + \mcO(t^{3/2}\grad u\lap\Phi).
	\end{equation}
	\begin{remark}
		We must assume that $ \mathrm{Tr}((\Delta \Phi)^2) < + \infty $ to ensure that we have enough regularity for the two derivatives on the noise coming from the full Taylor expansion.
	\end{remark}
	At the level of decorated trees, one has
	\begin{equs}
		T_2 =  \begin{tikzpicture}[node distance = 14pt, baseline=7]	
			\node[inner] (root) at (0,0) {};	
			\node[inner] (inner4) [above of = root] {};	
			\node[leaf] (leaf1) [above left of = inner4] {$ k_1 $};	
			\node[leaf] (leaf2) [above right of = inner4] {$ k_2 $};	
			\draw[int] (root.north) -- (inner4.south);	
			\draw[Phi] (inner4.north east) -- (leaf2.south west);	
			\draw[positive] (inner4.north west) -- (leaf1.south east);	 		
		\end{tikzpicture}, \quad	(\Pi T_2)(t)= -i  \int_{0}^{t} e^{i s(k_2^2+ 2 k_1k_2)} \Phi_{k_2}dW_{k_2}( s).
	\end{equs}
	With a quick computation, one also realises that
	\begin{equs}
		S(T_2) = 1, \quad \Upsilon(T_2)(v) = {v}_{k_1}.
	\end{equs}
	By setting $ n,r=1 $ and using \eqref{discretisation_T_2} we can obtain
	\begin{equs}
		(\Pi^{n,r} T_2 )(t) =-i\Phi_{k_2} (W_{k_2}(t)-W_{k_2}(0 )).
	\end{equs}
	It remains to treat the double stochastic integral given by:
	\begin{equs}
		T_3 & = \begin{tikzpicture}[node distance = 14pt, baseline=14]
			\node[inner] (root) at (0,0) {};
			\node[inner] (inner1) [above of = root] {};
			\node[inner] (inner2) [above left of = inner1] {};
			\node[inner] (inner3) [above of = inner2] {};
			\node[leaf] (leaf1) [above left of = inner3] {$ k_1 $};
			\node[leaf] (leaf2) [above right of = inner3] {$ k_2 $};
			\node[leaf] (leaf3) [above right of = inner1] {$ k_3 $};
			\draw[int] (root.north) -- (inner1.south);
			\draw[positive] (inner1.north west) -- (inner2.south east);
			\draw[int] (inner2.north) -- (inner3.south);
			\draw[positive] (inner3.north west) -- (leaf1.south east);
			\draw[Phi] (inner3.north east) -- (leaf2.south west);
			\draw[Phi] (inner1.north east) -- (leaf3.south west); 
		\end{tikzpicture}, \quad  	S(T_3) = 1, \quad \Upsilon(T_3)(v) = {v}_{k_1},
		\\
		(\Pi T_3)(t)  & = -i	\int_0^t e^{i  sk^2}\left(e^{-i s k_{12}^2} (\Pi T_2)(s)\right)\Phi_{k_3}dW_{k_3}(s)
	\end{equs}
	where $ k = k_1 + k_2 + k_3 $ and $ k_{12} = k_1 + k_2 $. One first observes, as before, that one has
	\begin{equs}
		(\Pi T_3)(t) = -i	\int_0^t e^{is (k_3^2 + 2k_{12}k_3)} (\Pi T_2)(s)\Phi_{k_3}dW_{k_3}(s).
	\end{equs}
	By performing the Taylor expansion on all terms of the form  $ e^{i s \cdot} $, one gets
	\begin{equs}
		(\Pi^{n,r} T_3 )(t) =  \int_0^t \Phi_{k_2} (W_{k_2}(s)-W_{k_2}(0 )) \Phi_{k_3}dW_{k_3}(s),
	\end{equs}
	with a local error of order $ \mcO(t^{3/2}k_1k_2^2k_3^3) $.
	
	\begin{proposition} \label{expansion_scheme_NLS}
		A low regularity scheme for stochastic NLS with multiplicative noise \eqref{NLS_duhamel} of order $ \mathcal{O}(t^{3/2}) $ is given in Fourier space by:
		\begin{equs}
			U^{n,r}_k(v,t) & = 	e^{-i t k^2}v_k - \sum_{k = -k_1 + k_2 + k_3}	e^{-i t k^2}\frac{e^{2ik_1^2 t}-1}{2k_1^2} \bar{v}_{k_1} v_{k_2} v_{k_3}  \\
			& - \sum_{k= k_1 + k_2} i  e^{-i t  k^2} \Phi_{k_2} (W_{k_2}(t)-W_{k_2}(0 )) v_{k_1} \\
			& - \sum_{k= k_1 + k_2 + k_3}e^{-i t k^2} \int_0^t \Phi_{k_2} (W_{k_2}(s)-W_{k_2}(0 )) \Phi_{k_3}dW_{k_3}(s)  v_{k_1}
		\end{equs}
		where one has to assume $ v $ to be in $ H^1 $ and that $ \mathrm{Tr}\left((\Delta \Phi)^2\right) < + \infty $ .
	\end{proposition}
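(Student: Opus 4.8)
The plan is to obtain the statement by assembling Proposition~\ref{tree_series} with the local error computations carried out in this subsection. Taking $r=1$ in Proposition~\ref{tree_series}, the exact tree series $U^{1}_{k}(t,v)=\sum_{T\in\mcT^{1,k}_{0}(R)}\frac{\Upsilon(T)(v)}{S(T)}(\Pi T)(t)$ satisfies $u_{k}(t)-U^{1}_{k}(t,v)=\mcO(t^{3/2})$, so by the triangle inequality it suffices, for $n=r=1$, to estimate
\[
U^{1}_{k}(t,v)-U^{n,r}_{k}(t,v)=\sum_{T\in\mcT^{1,k}_{0}(R)}\frac{\Upsilon(T)(v)}{S(T)}\rbr{(\Pi T)(t)-(\Pi^{n,r}T)(t)}
\]
and to show that each summand is $\mcO(t^{3/2})$ in the norm in which the scheme is claimed to converge.

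The first step is to check that $\mcT^{1,k}_{0}(R)$ contains exactly four trees. From the inductive description of $\mcT_{0}(R)$, a tree generated by $R$ that is not the bare propagator $\mathcal{I}_{(\mathfrak{t}_{1},0)}(\lambda_{k})$ carries, just below its outer $\mathcal{I}_{(\mathfrak{t}_{1},0)}$, one of the two branchings of $R((\mathfrak{t}_{2},p))$, and any further nesting inserts an extra edge in $\Lab_{2,+}$ or an extra noise edge, raising $\vert\cdot\vert_{\text{ord}}$ by $1$ and $\tfrac12$ respectively; hence the trees of order $\le 1$ are precisely the bare propagator (order $0$), $T_{1}$ (order $1$, $k=-k_{1}+k_{2}+k_{3}$), $T_{2}$ (order $\tfrac12$, $k=k_{1}+k_{2}$) and $T_{3}$ (order $1$, $k=k_{1}+k_{2}+k_{3}$), with the symmetry factors and elementary differentials already recorded. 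Then, for each tree one quotes the discretisation and its local error: the propagator is reproduced exactly by $e^{-itk^{2}}$; for $T_{1}$ the resonance splitting $P=2k_{1}^{2}+P_{\text{low}}$ with $P_{\text{low}}=-2k_{1}(k_{2}+k_{3})+2k_{2}k_{3}$ lets the dominant phase be integrated exactly via \eqref{discretisation_T_1}, leaving the Taylor remainder $\mcO(t^{2}P_{\text{low}})$ in which only first order operators hit $v$; for $T_{2}$, since $\int_{0}^{t}e^{isk_{2}^{2}}\,dW_{k_{2}}(s)$ has no pathwise primitive, one fully Taylor expands as in \eqref{discretisation_T_2}, with remainder $\mcO(t^{3/2}k_{1}k_{2}^{2})$; for $T_{3}$ one substitutes the $T_{2}$ expansion and again fully Taylor expands both exponentials, with remainder $\mcO(t^{3/2}\poly(k_{1},k_{2},k_{3}))$ whose heaviest weight is two derivatives on a $\Phi$-coefficient. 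Multiplying by $\Upsilon(T)(v)/S(T)$ and summing over frequencies, one verifies that the gain of $k_{1}^{-2}$ from the resonance factorisation in the $T_{1}$ term together with $v\in H^{1}$ handles the frequency weights landing on $v$, while $\mathrm{Tr}((\Delta\Phi)^{2})<\infty$ handles the two derivatives landing on the noise coefficients, producing the required $\mcO(t^{3/2})$ for each summand and hence the proposition.

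I expect the main obstacle to be the frequency bookkeeping, in particular for $T_{3}$: one must follow how the stochastic Taylor remainders of the inner and outer exponentials combine inside the iterated It\^o integral, confirm that no power worse than $t^{3/2}$ is produced, and check that the resonance smoothing $k_{1}^{-2}$ together with $v\in H^{1}$ and $\mathrm{Tr}((\Delta\Phi)^{2})<\infty$ are exactly enough to close the convolution sums in the chosen norm, using the Sobolev/algebra structure of function spaces on $\mbbT^{3}$. A minor companion point is to fix once and for all the norm in which the $\mcO(t^{3/2})$ estimate is asserted, so that the frequency weights above translate into genuine function-space bounds; the enumeration of $\mcT^{1,k}_{0}(R)$ and the $T_{1}$, $T_{2}$ estimates are then routine.
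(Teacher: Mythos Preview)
Your proposal is correct and follows the same route as the paper: identify $\mcT^{1,k}_{0}(R)$ as the four trees $\{\mathcal{I}_{(\mathfrak{t}_1,0)}(\lambda_k),\,\mathcal{I}_{(\mathfrak{t}_1,0)}(\lambda_k T_i):i=1,2,3\}$, write out $U^{1}_{k}$ via Proposition~\ref{tree_series}, and replace each $(\Pi T_i)(t)$ by the discretisation $(\Pi^{1,1}T_i)(t)$ whose local error was computed in the preceding discussion. One small clarification on your closing remarks: the $H^1$ requirement for the $T_1$ term stems from the Taylor remainder $\mcO(t^2 P_{\text{low}})$ carrying only first-order frequency weights, not from any smoothing by the $k_1^{-2}$ appearing in the scheme itself---that factor is simply bounded since $(e^{ix}-1)/x$ is.
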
 
	\begin{proof} One can observe that
		\begin{equs}
			\CT^{1,k}_{0}(R) = \lbrace \mathcal{I}_{(\mathfrak{t}_1,0)}(\lambda_k), \, \mathcal{I}_{(\mathfrak{t}_1,0)}(\lambda_k T_1), \, \mathcal{I}_{(\mathfrak{t}_1,0)}(\lambda_k T_2), \, \mathcal{I}_{(\mathfrak{t}_1,0)}(\lambda_k T_3), \,  k_1, k_2, k_3 \in \mathbb{Z}^d \rbrace 
		\end{equs}
		Then, one has the following identity:
		\begin{equs}
			U_{k}^{1}(t, v) &=   \sum_{T \in \CT^{1,k}_{0}(R)} \frac{\Upsilon( T)(v)}{S(T)} (\Pi   T )(t) 
			\\ &= 	e^{-i t k^2}v_k + e^{-itk^2} \sum_{k=-k_1 + k_2 + k_3} \frac{\Upsilon( T_1)(v)}{S(T_1)} (\Pi   T_1 )(t) 
			\\ & + \sum_{k=k_1 + k_2} \frac{\Upsilon( T_2)(v)}{S(T_2)} (\Pi   T_2 )(t) 
			+ \sum_{k=k_1 + k_2 + k_3} \frac{\Upsilon( T_3)(v)}{S(T_3)} (\Pi   T_3 )(t) .
		\end{equs}
		One just has to replace $ (\Pi T_i)(t) $ by its discretisation $ (\Pi^{1,1} T_i)(t) $ in order to conclude.
	\end{proof}

	\subsection{Limitation for higher order}
	Here, an iterated integral that can not be approximated using resonance analysis is exhibited. One observes that our inability to treat integrals such as \eqref{eq:stoch_integral} is due to limitations in calculus of random variables. This is the reason that the resonance approach fails to provide higher order low regularity schemes, which breaks down when we run into integrals of the form:
	\begin{equation}\label{eq:bad_integral}
		\int_{0}^{t} e^{isF(k) }W_k( s)d s = \frac{e^{isF(k) }}{iF(k)}W_k( s)\bigg|_{s=0}^{s=t} - \int_{0}^{t}\frac{e^{isF(k) }}{iF(k)}dW_k( s),
	\end{equation}
	which is the integration by parts formula. In the case of the SNLS this occurs when deriving the order $ \mcO(t^2) $ scheme. If we were to continue to iterate the form \eqref{picard32} by plugging the first stochastic term into one of the initial conditions in the nonlinear term then we would have the integral 
	\begin{align*}
		&\int_{0}^{t}e^{-is\Delta}\left(e^{-is\Delta}\bar{v}\right)\left(e^{is\Delta}v\right)\left( e^{is\Delta}\int_{0}^{s}e^{-is_1\Delta}\left(e^{is_1\Delta}v\right)\Phi dW(s_1)\right)d s\\
		&\quad=\sum_{k=k_1 + k_2 - k_3 + k_4} e^{ikx} v_{k_1}\bar{v}_{k_3}v_{k_4} \Phi_{k_2}\int_{0}^{t} e^{i s P_1}\int_{0}^{s} e^{i s_1 P_2 }dW_{k_2}(s_1)d s,
	\end{align*}
	where $ P_1 = 2 k_3^2-2 k_3 ( k_1 + k_2 + k_4) + 2 k_1 (k_2+ k_4) + 2 k_2 k_4 $ and $ P_2=k_2^2+2k_1k_2 $. We perform the approximation of the stochastic integral by Taylor expansion of the operator, i.e.
	\begin{equation}
		\int_{0}^ s e^{i s_1 P_2 }dW_{k_2}(s_1) = W_{k_2}(s) - W_{k_2}(0)+\mcO( s^{3/2} k_1k_2^2),
	\end{equation}
	which leads to the following,
	\begin{equs}
		\int_{0}^{t} e^{i sP_1(k) }\int_0^s e^{i  s_1 P_2 }dW_{k_2}(s_1)d s=\int_0^t e^{i  s P_1} \left( W_{k_2}(s) - W_{k_2}(0) \right. \\ \left. +\mcO( s^{3/2} k_1k_2^2) \right) d s.
	\end{equs}
	To proceed as in the deterministic setting we would observe that the only part of $ P_1$ corresponding to a second order differential operator is $ 2k_3^2 $ which would lead us to consider the integral
	\begin{equation}
		\int_0^t e^{2i s k_3^2 }\left(W_{k_2}(s) - W_{k_2}(0) \right)d s.
	\end{equation}
	But this has no path-wise solution and we are thus forced to Taylor expand the operator $ e^{i s P_1 } = 1 + \mathcal{O}( s P_1 ) $. This gives us the approximation 
	\begin{align*}
		\int_{0}^{t} e^{i s P_1} \left( W_{k_2}(s) -  W_{k_2}(0)\right)d s &= t  \left( W_{k_2}( s) -  W_{k_2}( 0) \right)\\
		&\quad + \int_0^t s dW_{k_2}( s) + \mcO\rbr{ t^{5/2}P_1},
	\end{align*}
	which implies order $ \mcO(t^{5/2}\lap u) $. Below, we explain how this iterated integral can be encoded via the tree formalism: 
	\begin{equs}
		T_4 & = \begin{tikzpicture}[node distance = 14pt, baseline=7]				
			\node[inner] (root) at (0,0) {};			
			\node[inner] (inner4) [above of = root] {};		
			\node[inner] (inner5) [above of = inner4] {};		
			\node[inner] (inner6) [above of = inner5] {};		
			\node[leaf] (leaf1) [above left of = inner4] {$ k_3 $};		
			\node[leaf] (leaf2) [above right of = inner6] {$ k_2 $};		
			\node[leaf] (leaf4) [above left of = inner6] {$ k_1 $};
			\node[leaf] (leaf3) [above right of = inner4] {$ k_4 $};		
			\draw[int] (root.north) -- (inner4.south);
			\draw[positive] (inner4.north) -- (inner5.south);
			\draw[int] (inner5.north) -- (inner6.south);
			\draw[conj] (inner4.north west) -- (leaf1.south east);
			\draw[positive] (inner4.north east) -- (leaf3.south west);	
			\draw[Phi] (inner6.north east) -- (leaf2.south west);	
			\draw[positive] (inner6.north west) -- (leaf4.south east);			  
		\end{tikzpicture} , \quad S(T_4) = 1, \quad \Upsilon(T_4)(v) = 2 v_{k_1} \bar{v}_{k_3} v_{k_4}
		\\ (\Pi T_4)(t) &  = 	-   \int^{t}_0 e^{i s k^2} e^{i s k_3^2}  e^{-i s k_{12}^2} (\Pi T_2)(s)  e^{-i s k_4^2}d s.
	\end{equs}
	where $ k = k_1 + k_2 -k_3 + k_4 $ and $ k_{12} = k_1 + k_2 $. Using the previous approximation, one can set for $ r \geq 2 $
	\begin{equs} \label{def_Pi_T4}
		\begin{aligned}
			(\Pi^{n,r} T_4)(t) & = -   \sum_{\ell  \leq r-2} \int_{0}^{t} \frac{s^{\ell}}{\ell !} i^{\ell}(P_1 + P_2)^{\ell}  \Phi_{k_2} \left(W_{k_2}(s) -  W_{k_2}(0)\right)d s.
		\end{aligned}
	\end{equs}
	One deduces the following proposition:
	\begin{proposition} \label{counter_example}
		The discretisation of $ (\Pi T_4)(t) $ is given by 
		$ 	(\Pi^{n,r} T_4)(t) $ (defined by \eqref{def_Pi_T4}) for every $ r \geq 2 $. One has
		\begin{equs}
			(\Pi T_4)(t) - (\Pi^{n,r} T_4)(t) = \mathcal{O}(\Phi_{k_2} k_2^{2r-2} k_3^{2r-2} t^{r + 1/2}).
		\end{equs}
	\end{proposition}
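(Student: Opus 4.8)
Since this concerns a single iterated integral, I would begin by writing $(\Pi T_4)(t)$ out completely. Using the recursive definition \eqref{def_Pi} of $\Pi$ together with the frequency identity \eqref{frequencies_identity}, one finds, up to a unimodular constant $c$,
\[
(\Pi T_4)(t)=c\,\Phi_{k_2}\int_0^t e^{isP_1}\Big(\int_0^s e^{is_1P_2}\,dW_{k_2}(s_1)\Big)\,ds,
\]
with $P_2=k_{12}^2-k_1^2=k_2^2+2k_1k_2$ and $P_1$ the degree-two resonance polynomial recorded in the text (whose only pure square is $2k_3^2$). Throughout I write $\tilde W_{k_2}(s)=W_{k_2}(s)-W_{k_2}(0)$ and use repeatedly that $\tilde W_{k_2}(s)=\mathcal{O}(s^{1/2})$, either in each $L^p(\Omega)$ by the Burkholder--Davis--Gundy inequality or almost surely up to a $\sqrt{\log\log(1/s)}$ factor by the law of the iterated logarithm; the $\mathcal{O}$ in the statement is understood in whichever of these senses one works with, exactly as for the local errors of $T_1,T_2,T_3$.

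The key step is integration by parts in the inner It\^o integral: since $s_1\mapsto e^{is_1P_2}$ is $C^1$ of finite variation and $\tilde W_{k_2}(0)=0$,
\[
\int_0^s e^{is_1P_2}\,dW_{k_2}(s_1)=e^{isP_2}\tilde W_{k_2}(s)-iP_2\int_0^s e^{is_1P_2}\,\tilde W_{k_2}(s_1)\,ds_1 .
\]
Substituting this and using $e^{isP_1}e^{isP_2}=e^{is(P_1+P_2)}$ gives
\[
(\Pi T_4)(t)=c\,\Phi_{k_2}\!\int_0^t\! e^{is(P_1+P_2)}\tilde W_{k_2}(s)\,ds-ciP_2\Phi_{k_2}\!\int_0^t\! e^{isP_1}\!\int_0^s\! e^{is_1P_2}\tilde W_{k_2}(s_1)\,ds_1\,ds .
\]
This is precisely what forces the combination $P_1+P_2$, rather than $P_1$ alone, to appear in \eqref{def_Pi_T4}, and it is the point at which the absence of a closed form for the inner It\^o integral becomes visible: we must keep $e^{is(P_1+P_2)}$ as a genuine oscillation rather than integrating out its dominant part $2k_3^2+k_2^2$.

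It then remains to Taylor expand $e^{is(P_1+P_2)}=\sum_{\ell=0}^{r-2}\tfrac{1}{\ell!}(is(P_1+P_2))^\ell+\mathcal{O}\big((s|P_1+P_2|)^{r-1}\big)$ in the first term: the principal sum is exactly $(\Pi^{n,r}T_4)(t)$ once $c$ is fixed, while the remainder is bounded, using $|\tilde W_{k_2}(s)|=\mathcal{O}(s^{1/2})$, by $\mathcal{O}\big(\Phi_{k_2}|P_1+P_2|^{r-1}\int_0^t s^{r-1}|\tilde W_{k_2}(s)|\,ds\big)=\mathcal{O}\big(\Phi_{k_2}|P_1+P_2|^{r-1}t^{r+1/2}\big)$. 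For the second term, which already carries the factor $P_2$ and is one power of $t$ smoother, I would iterate the same mechanism: Taylor expanding the remaining oscillatory factors $e^{isP_1}$ and $e^{is_1P_2}$ and using stochastic Fubini to reduce everything to monomials $\int_0^t s^\ell\tilde W_{k_2}(s)\,ds$, the admissible ones combining with the principal terms and the rest being of size $\mathcal{O}(t^{r+1/2})$, one obtains a remainder $\mathcal{O}\big(\Phi_{k_2}|P_1|^{r-1}|P_2|^{r-1}t^{r+1/2}\big)$. Finally, since $P_2=k_2(k_2+2k_1)$ and the only pure square in $P_1$ is $2k_3^2$ (all other monomials being of the form $k_ik_j$ with $i\neq j$), the dominant contributions to $|P_1|^{r-1}$ and $|P_2|^{r-1}$ are $k_3^{2(r-1)}$ and $k_2^{2(r-1)}$, so every error term above is $\mathcal{O}(\Phi_{k_2}k_2^{2r-2}k_3^{2r-2}t^{r+1/2})$, which is the claim.

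The genuine difficulty — and the reason this example exhibits a real obstruction rather than a technicality — is that $\int_0^s e^{is_1P_2}\Phi_{k_2}\,dW_{k_2}(s_1)$ has no path-wise closed form, so the resonance recipe ``integrate the dominant part exactly, Taylor expand only the genuinely lower part'' is unavailable here. One is compelled to Taylor expand the full oscillatory factors to order $r-1$, and the leading error then inherits the full polynomial degree $2(r-1)$ of $P_1$ and $P_2$, i.e.\ $2r-2$ spatial derivatives on the initial datum and on $\Phi$. Making the iterated estimate of the integration-by-parts remainder precise, and checking that the frequency constraint $k=k_1+k_2-k_3+k_4$ cannot produce a cancellation lowering this degree, is the technical heart of the argument; it is exactly what separates \eqref{def_Pi_T4} from the genuinely low-regularity discretisations $(\Pi^{n,r}T_i)$, $i=1,2,3$.
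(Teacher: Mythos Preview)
Your integration-by-parts approach is a different route from the paper's. The paper argues directly: it Taylor-expands the inner stochastic integral $\int_0^s e^{is_1P_2}\,dW_{k_2}(s_1)=\tilde W_{k_2}(s)+\mathcal{O}(s^{3/2}P_2)$, then observes that the remaining outer integral $\int_0^t e^{isP_1}\tilde W_{k_2}(s)\,ds$ has no closed form (since the dominant part $e^{2isk_3^2}$ cannot be integrated against a Brownian path), and is therefore forced to Taylor-expand $e^{isP_1}$ as well. Only the case $r=2$ is worked out; the general definition \eqref{def_Pi_T4} is then simply stated. Your IBP step is more illuminating in one respect: it gives a clean reason why the combined polynomial $P_1+P_2$, rather than $P_1$ and $P_2$ separately, shows up in \eqref{def_Pi_T4}.

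There is, however, a gap in your treatment of the second IBP term for $r\geq 3$. You assert that after Taylor-expanding and applying Fubini, the admissible pieces of $-iP_2\int_0^t e^{isP_1}\int_0^s e^{is_1P_2}\tilde W_{k_2}(s_1)\,ds_1\,ds$ ``combine with the principal terms'' from the first IBP term to reproduce \eqref{def_Pi_T4}. This does not hold: already at leading order the second term equals $-iP_2\int_0^t(t-s)\tilde W_{k_2}(s)\,ds$, which is of size $\mathcal{O}(P_2\,t^{5/2})$ and, when split as $-iP_2\,t\int_0^t\tilde W_{k_2}(s)\,ds+iP_2\int_0^t s\,\tilde W_{k_2}(s)\,ds$, matches no term of $(\Pi^{n,r}T_4)$ --- the first piece carries an extraneous factor $t$, and the second would shift the $\ell=1$ coefficient from $P_1+P_2$ to $P_1+2P_2$. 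So for $r\geq 3$ your decomposition does not show that $(\Pi T_4)(t)-(\Pi^{n,r}T_4)(t)=\mathcal{O}(t^{r+1/2})$. The paper's own discussion does not resolve this either (only $r=2$ is treated), so the looseness for higher $r$ is shared; but your specific claim that the pieces combine is not correct as stated.
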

	As a corollary, we obtain the local error of Theorem \ref{main_theorem_general} when $ r \geq 2 $.
	Hence we see that for schemes of order $ \mcO(t^{5/2}) $ and higher we will no longer be able to obtain regularity lower than in traditional schemes since the stochastic integral will prevent us from performing the resonance analysis.  For higher orders schemes, we will also have integrals of type $ \int_0^t s^n \Phi dW(s) $ which have no path-wise solution and would need to be approximated. This can be done with schemes such as those described in \cite{K.P1992,R2010}. 
	
	\section{Error analysis}
	\label{Sec::4}
	In this section the definitions of convergence, error, and stability are introduced. We then proceed to state the main results of the paper, a summary of which is as follows:
	\begin{itemize}
		\item Proposition \ref{prop:schrodinger_O1/2_local} and Theorem \ref{scheme_order_2} which are local error bounds for the schemes \ref{scheme:schrodinger_O1/2} and \ref{scheme:schrodinger_O1} which are of order $ \mcO(t) $ and $ \mcO(t^{3/2}) $ respectively. These schemes can also be applied to Schr\"odinger's equation with additive noise as a special case.
		\item Proposition \ref{Manakov_local_error} which is a local error bound for the low regularity scheme \ref{scheme:manakov_O1/2} of order $ \mcO(t) $ for the Manakov equations. 
		\item Propostion \ref{lem:stability}, which is a statement about the stability of the schemes.
		\item Theorem \ref{prop:global_error}, which states the rates of convergence of all the schemes presented above.  
	\end{itemize}
	The statement of each theorem is followed by a proof of the local error in each case. We then present a proof of the stability and global error, which works for all four schemes. In each case we obtain results for both strong and pathwise convergence. 

	\subsection{Definitions of convergence and error}\label{ss::notation}
	We measure error in terms of the Sobolev norm
	\begin{equation}
		\norm{f}_{k,p}^p = \sum_{\ell=0}^k\norm{D^{(\ell)}f}_p^p,
	\end{equation}
	where $ \norm{\cdot}_p $ is the $ L^p $ norm and $ D^{\ell} $ represents the $ \ell^{th} $-order weak differential operator. We will use the following notation 
	\begin{itemize}
		\item We recall the notation $ u(0) = v $ for the initial data. If we are considering two initial datum then we will use $ u_1(0) = v_1 $ and $ u_2(0) = v_2 $.
		\item Let $ \pi(t_0,T) $ be a uniform discretisation of the interval $[t_0,t_N] $ such that $ t_N = T $, $ t_0 < t_1<...<t_N $,  and $ t_{\ell+1} - t_\ell = t \ \ \forall \ell\in\set{0,\dots,N} $.
		\item $ v(t_\ell) = E_t(v(t_{\ell-1})) $ is the exact value of the solution at time $ t_\ell $ given by Duhamel's formulation.
		\item $v_N =  S_t(v_{N-1})=S_t^N(v)$  is the approximation given by the $ N$th  iteration of the scheme. This can also be written as \[\prod_{i=0}^{N-1}S_t(u_i) = S_t(v_{N-1})\circ S_t(v_{N-2})\circ \cdots \circ S_t(v),\]and $ \set{v_n}_{n=0}^N $ is a sequence of approximations beginning with $ v $.
	\end{itemize}
	
	\begin{definition}[Local error]\label{defn:local-error}
		A scheme is said to have local error of order $ \gamma $ if
		\begin{equation}\label{eq:pathwiselocal}
			\norm{E(u(t_\ell), t) - S(u(t_\ell), t)}^p_{k,p} = \norm{u(t_{\ell+1}) - \tilde{u}(t_{\ell+1})}^p_{k,p} \leq Ct^{\gamma p}.
		\end{equation}	
	\end{definition}
	\begin{remark}
		Note that $ k $ refers to the regularity required by the initial data $ v $. This regularity is necessary to accommodate the Taylor expansion of the integrands in the iteration of Duhamel's formula. Hence it will depend on the order of the scheme, higher orders corresponding to higher regularity. The impact of Taylor expansion is reduced through resonance analysis and we obtain higher order schemes than in previous works but with lower regularity assumptions. 
	\end{remark}
	\begin{definition}[Global error]\label{defn:global-error}
		A scheme is said to have \textit{strong} global error of order $ \alpha $ on the interval $ [0,T] $ if 
		\begin{equation}
			\mbbE\sqbr{\sup_{t_i\in\pi(t_0,T)}\norm{u(t_i) - S_t(u_{i-1})}^p_{k,p}} \leq Ct^{\alpha p} 
		\end{equation}
		and \textit{path-wise} global error of order $ \alpha $ if 
		\begin{equation}
			\sup_{t_i\in\pi(t_0,T)}\norm{u(t_i) - S_t(u_{i-1})}^p_{k,p} \leq Ct^{\alpha p}
		\end{equation}
	\end{definition}
	\begin{remark}
		In the literature the most commonly seen variant is strong convergence which is a bound on the moments of the error. In practice, simulations are performed path-wise and this kind of convergence is the strongest. Weak convergence studies the convergence in moments, this is useful in the study of the ergodicity and stationary distributions of stochastic systems. It is proved in \cite{K.N2007} that strong convergence of order $ \mathcal{O}(t^\alpha) $ implies path-wise convergence or order $ \mathcal{O}(t^{\alpha-\eps}) $ for some arbitrary $ \eps $. 
	\end{remark}
	\begin{definition}[Stability and consistency]\label{def:stabitlity_and_consistancy}
		A scheme is said to be consistent if the order of the local error is strictly greater than $ 0 $, see definition 4.1 in \cite{T2012}. 
		It is said to be stochastically numerically stable, see \cite{K.P1992}, if for any finite interval $ [t_0,T] $ there exists a constant $ C>0 $, such that for each $ \eps>0 $ and $ t\in(0,C) $ it holds that
		\begin{equation*}
			\lim_{\norm{v_1 - v_2}\rightarrow 0}\sup_{t\in(t_0,T)}\mbbP\rbr{\norm{S_t^N(v_1) - S^N_t(v_2)}^p_{k,p}\geq \eps} =0.
		\end{equation*}
	\end{definition}
	\begin{remark}
		The definition of stochastic stability differs slightly from the deterministic setting wherein a scheme is stable if and only if there exists a constant $ C_t >0$ such that for all pairs of initial datum $ (v_1,v_2) $ the following holds
		\begin{equation}
			\norm{S^N_t(v_1)-S^N_t(v_2)}\leq C_t\norm{v_1-v_2}.
		\end{equation}
		Stochastic numerical stability follows immediately by application of a Martingale inequality followed by Markov's inequality. We refer to \cite{T2012} for definitions concerning numerical schemes for deterministic PDE.
	\end{remark}
	Gradients appearing on the iterations of a scheme itself can cause instability. To take care of this we could use a numerical differentiation strategy, but this normally breaks the martingale structure of the noise discretisation and introduces complexity to the analysis. Filter functions, which are described in detail in \cite{H.W.L2002} allow us to ensure stability by considering a function that preserves the local error while bounding the derivative. 
	\begin{definition}[Filter functions]\label{def:filter_function}
		A filter function is an operator of the form 
		\begin{equation}
			\Psi = \Psi(it\mcL),\quad\Psi(0) = 1,\quad \norm{\Psi(it \mcL) u}\leq 1.
		\end{equation}	
	\end{definition}
	\begin{remark} 
		A filter as defined above will preserve the local error without introducing regularity on the initial conditions as long as the additional condition is met that 
		\begin{equ}
			\Psi(it\mcL)u =\mcL u + \mcO(t\mcL u) 
		\end{equ}
		and $ \mcL $ does not require more regularity than the scheme in question.
	\end{remark}
	\subsection{Schr\"odinger's equation}

	We have described the derivation of this scheme in the previous section. 
	\begin{scheme}\label{scheme:schrodinger_O1/2}
		Define $f_{1}(\sigma)=\frac{e^{\sigma }-1}{\sigma}$, 
		\begin{align*}
			u_{\ell+1}&=e^{it\Delta}u_{\ell}-it e^{it\Delta}\left(u_{\ell}^{2}f_{1}\left(2i\Delta\right)\bar{u}_{\ell}\right)-i\sqrt{t} e^{it\Delta}u_{\ell} \Phi\chi_\ell \\
			&\quad - \frac{t}{2}e^{it\Delta}u_\ell\rbr{\Phi\chi^2_\ell - \mathrm{Tr}(\Phi^2)}.
		\end{align*} 
	\end{scheme}
	\begin{proposition}\label{prop:schrodinger_O1/2_local}
		For initial condition $ v\in H^1 $ and $ \mathrm{Tr}(
		(\lap\Phi)^2)< + \infty $, the scheme \ref{scheme:schrodinger_O1/2} approximates \eqref{schrodinger_equation} with pathwise local error of order $\mathcal{O}(t^{3/2})$.
	\end{proposition}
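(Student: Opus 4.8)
The plan is to establish the local error bound by comparing the one-step map of Scheme \ref{scheme:schrodinger_O1/2} against the exact Duhamel solution, using the tree-series expansion from Proposition \ref{tree_series} at order $r = 1$ as the intermediate object. First I would invoke Proposition \ref{tree_series} to write $u_k(t) = U^1_k(t,v) + \mcO(t^{3/2})$, so that $u(t) - U^1(t,v) = \mcO(t^{3/2})$ in the appropriate Sobolev norm once we confirm the sum over $\CT^{1,k}_0(R)$ converges under the stated trace condition. By the identification in Proposition \ref{expansion_scheme_NLS}, the relevant trees are exactly $\mcI_{(\mft_1,0)}(\lambda_k)$, $\mcI_{(\mft_1,0)}(\lambda_k T_1)$, $\mcI_{(\mft_1,0)}(\lambda_k T_2)$, and $\mcI_{(\mft_1,0)}(\lambda_k T_3)$, so it suffices to show that replacing each $(\Pi T_i)(t)$ by the discretisation $(\Pi^{1,1}T_i)(t)$ and by the specific closed forms used in Scheme \ref{scheme:schrodinger_O1/2} introduces only errors of size $\mcO(t^{3/2})$ in $H^1$ under the assumption $\mathrm{Tr}((\lap\Phi)^2) < \infty$.

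Next I would check the three discretisation errors term by term, which is where the bulk of the work lies but each piece is already sketched in Section \ref{Sec::3}. For $T_1$, the resonance factorisation $P = 2k_1^2 + (P - 2k_1^2)$ together with the exact integration \eqref{discretisation_T_1} and a Taylor expansion of $e^{is(P - 2k_1^2)}$ gives an error controlled by $t^{3/2}$ times first-order differential operators applied to $v$; since only $2k_1^2$ survives as a genuine second-order operator and it is integrated exactly, $v \in H^1$ suffices, and the factor $f_1(2i\Delta)\bar u_\ell$ in the scheme is precisely the physical-space form of this. For the first stochastic term I would use \eqref{discretisation_T_2}: the full Taylor expansion of $e^{is(k_2^2 + 2k_1 k_2)}$ produces $W_{k_2}(t) - W_{k_2}(0)$ plus a remainder whose $L^2(\Omega)$ size is $\mcO(t^{3/2} k_1 k_2^2)$, so that summing against $v_{k_1}$ and the Fourier multipliers of $\Phi$ requires $\mathrm{Tr}((\lap\Phi)^2) < \infty$ — this is the origin of the trace hypothesis and it enters via an application of Itô isometry followed by Cauchy--Schwarz in the frequency sum. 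For $T_3$, i.e. the $\frac{t}{2}(\Phi\chi_\ell^2 - \mathrm{Tr}(\Phi^2))$ term, I would use the explicit double-Itô-integral identity \eqref{eq:discrete_W^2} to see that $(\Pi^{1,1}T_3)(t)$ equals exactly the stated quantity, and then bound the remaining difference $(\Pi T_3)(t) - (\Pi^{1,1}T_3)(t)$ by $\mcO(t^{3/2})$ again using Itô isometry on the iterated integral, with the same regularity budget absorbing the frequency weights $k_1 k_2^2 k_3^3$ against $v \in H^1$ and two trace-class factors of $\Phi$.

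Finally I would collect the three contributions, use the triangle inequality in $\norm{\cdot}_{1,p}$ (reducing to $p = 2$ via the Gaussian hypercontractivity / equivalence of $L^p(\Omega)$ norms on a fixed Wiener chaos, or else working directly with $p$-th moments and the Burkholder--Davis--Gundy inequality in place of Itô isometry), and conclude that the one-step error is $\mcO(t^{3/2})$ as required by Definition \ref{defn:local-error}. The main obstacle is bookkeeping the frequency-dependent constants: one has to verify that every Taylor remainder term that is dropped carries enough negative powers of the semigroup to be absorbed either by the $H^1$ regularity of $v$ or by one of the trace conditions on $\Phi$, and in particular that no term demands $v \in H^{1+\delta}$ — this is exactly the point of the resonance factorisation for $T_1$ and must be verified with care, while for the stochastic trees the subtlety is that the full Taylor expansion (rather than resonance analysis) is forced, so the derivative loss lands on $\Phi$ rather than on $v$, which is acceptable precisely because $\mathrm{Tr}((\lap\Phi)^2) < \infty$ is assumed.
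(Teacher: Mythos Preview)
Your proposal is correct and follows the same logic as the paper, but you are re-deriving in detail what the paper simply cites: the paper's proof is two sentences, observing that Scheme~\ref{scheme:schrodinger_O1/2} is nothing but the physical-space rewriting of the Fourier-space scheme already established in Proposition~\ref{expansion_scheme_NLS}, using the It\^o identities \eqref{eq:discrete_W} and \eqref{eq:discrete_W^2} to convert the iterated stochastic integrals into the increments $\sqrt{t}\,\Phi\chi_\ell$ and $\tfrac{t}{2}(\Phi\chi_\ell^2 - \mathrm{Tr}(\Phi^2))$. Your term-by-term check of the $T_1$, $T_2$, $T_3$ discretisation errors reproduces the computations of Section~\ref{Sec::3} that feed into Proposition~\ref{expansion_scheme_NLS}, so the substance is identical---you are just not leveraging that those computations were already packaged there.
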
 
	\begin{proof}
		The theorem is just a rewriting of the scheme given in Proposition~\ref{expansion_scheme_NLS} in physical space. We use \eqref{eq:discrete_W} and \eqref{eq:discrete_W^2} for the rewriting of the stochastic iterated integrals.
	\end{proof}
	
	\begin{remark} In the case of additive noise, one has some simplification in the scheme due to the fact that one cannot get the double stochastic iterated integral. The scheme has the following form:
		\begin{align*}
			u_{\ell+1}&=e^{it\Delta}u_{\ell}-it e^{it\Delta}\left(u_{\ell}^{2}f_{1}\left(2i\Delta\right)\bar{u}_{\ell}\right)-i\sqrt{t} e^{it\Delta} \Phi\chi_\ell.
		\end{align*}
		The scheme estimates the solution with local error of order $\mathcal{O}(\nabla u,\Delta\Phi,t^{3/2})$.
	\end{remark}
	
	If we want to go up to order $ \mcO\rbr{t^2} $ in the discretisation, we need to collect more iterated integrals represented by new decorated trees. 
	One has for $ r= \frac{3}{2} $
	\begin{equs}
		\CT^{\frac{3}{2},k}_{0}(R) =  \left \lbrace   \mathcal{I}(\lambda_k), \,  \mathcal{I}(\lambda_k T_i), \, k_j \in \mathbb{Z}^d \, i \in \lbrace 1,...,7 \rbrace, \, j \in \lbrace 1,2,3,4 \rbrace \right \rbrace
	\end{equs}
	where 
	\begin{equs}
		T_5  & = \begin{tikzpicture}[node distance = 14pt, baseline=7]				
			\node[inner] (root) at (0,0) {};			
			\node[inner] (inner4) [above of = root] {};		
			\node[inner] (inner5) [above of = inner4] {};		
			\node[inner] (inner6) [above of = inner5] {};		
			\node[leaf] (leaf1) [above left of = inner4] {$ k_3 $};		
			\node[leaf] (leaf2) [above right of = inner6] {$ k_2 $};		
			\node[leaf] (leaf4) [above left of = inner6] {$ k_1 $};
			\node[leaf] (leaf3) [above right of = inner4] {$ k_4 $};		
			\draw[int] (root.north) -- (inner4.south);
			\draw[conj] (inner4.north) -- (inner5.south);
			\draw[conj-int] (inner5.north) -- (inner6.south);
			\draw[positive] (inner4.north west) -- (leaf1.south east);
			\draw[positive] (inner4.north east) -- (leaf3.south west);	
			\draw[Phi-conj] (inner6.north east) -- (leaf2.south west);	
			\draw[conj] (inner6.north west) -- (leaf4.south east);			  
		\end{tikzpicture} , \quad S(T_5) = 2, \quad \Upsilon(T_5)(v) = 2 \bar{v}_{k_1} v_{k_3} v_{k_4}
		\\ (\Pi T_5)(t) &  = 	- i  \int^{t}_0 e^{i s k^2} e^{-i s k_3^2}  e^{i s k_{12}^2} \overline{(\Pi T_2)(s)}  e^{-i s k_4^2}d s,
	\end{equs}
	with $ k = -k_1 -k_2 + k_3 + k_4 $. The decorated tree $ T_5 $ is very similar to $ T_4 $ and it is of order $ \mathcal{O}(t^{3/2}) $.
	\begin{equs}
		T_6 & = \begin{tikzpicture}[node distance = 14pt, baseline=14]
			\node[inner] (root) at (0,0) {};
			\node[inner] (inner1) [above of = root] {};
			\node[inner] (inner2) [above left of = inner1] {};
			\node[inner] (inner3) [above of = inner2] {};
			\node[leaf] (leaf1) [above left of = inner3] {$ k_1 $};
			\node[leaf] (leaf2) [above right of = inner3] {$ k_3 $};
			\node[leaf] (leaf4) [above of = inner3] {$ k_2 $};
			\node[leaf] (leaf3) [above right of = inner1] {$ k_4 $};
			\draw[int] (root.north) -- (inner1.south);
			\draw[positive] (inner1.north west) -- (inner2.south east);
			\draw[int] (inner2.north) -- (inner3.south);
			\draw[conj] (inner3.north west) -- (leaf1.south east);
			\draw[positive] (inner3.north east) -- (leaf2.south west);
			\draw[positive] (inner3.north) -- (leaf4.south);
			\draw[Phi] (inner1.north east) -- (leaf3.south west); 
		\end{tikzpicture}, \quad  	S(T_6) = 2, \quad \Upsilon(T_6)(v) = 2 \bar{v}_{k_1} v_{k_2} v_{k_3},
		\\
		(\Pi T_6)(t)  & = -i	\int_0^t e^{i  sk^2}\left(e^{-i s k_{123}^2} (\Pi T_1)(s)\right)\Phi_{k_4}dW_{k_4}(s)
	\end{equs}
	where $ k = -k_1 + k_2 + k_3 + k_4 $ and $ k_{123} = k_1 + k_2 + k_3 $. 
	The last decorated tree of order $ \mathcal{O}(t^{3/2}) $ is given by
	\begin{equs}
		T_7 & = \begin{tikzpicture}[node distance = 14pt, baseline=14]
			\node[inner] (root) at (0,0) {};
			\node[inner] (inner1) [above of = root] {};
			\node[inner] (inner2) [above left of = inner1] {};
			\node[inner] (inner3) [above of = inner2] {};
			\node[inner] (inner11) [above left of = inner3] {};
			\node[inner] (inner111) [above  of = inner11] {};
			\node[leaf] (leaf1) [above left of = inner111] {$ k_1 $};
			\node[leaf] (leaf11) [above right of = inner111] {$ k_2 $};
			\node[leaf] (leaf2) [above right of = inner3] {$ k_3 $};
			\node[leaf] (leaf3) [above right of = inner1] {$ k_4 $};
			\draw[int] (root.north) -- (inner1.south);
			\draw[positive] (inner1.north west) -- (inner2.south east);
			\draw[int] (inner2.north) -- (inner3.south);
			\draw[int] (inner11.north) -- (inner111.south);
			\draw[positive] (inner111.north west) -- (leaf1.south east);
			\draw[Phi] (inner111.north east) -- (leaf11.south west);
			\draw[positive] (inner3.north west) -- (inner11.south east);
			\draw[Phi] (inner1.north east) -- (leaf3.south west); 
			\draw[Phi] (inner3.north east) -- (leaf2.south west); 
		\end{tikzpicture}, \quad  	S(T_7) = 1, \quad \Upsilon(T_7)(v) = {v}_{k_1},
		\\
		(\Pi T_7)(t)  & = -i	\int_0^t e^{i  sk^2}\left(e^{-i s k_{123}^2} (\Pi T_3)(s)\right)\Phi_{k_4}dW_{k_4}(s)
	\end{equs}
	where $ k = k_1 + k_2 + k_3 + k_4 $.
	We first observe that
	\begin{equs}
		(\Pi^{n,1} T_1 )(t) = 	(\Pi^{n,\frac{3}{2}} T_1 )(t). 
	\end{equs}
	Then for the tree $ T_2 $, one has to push the expansion a bit further. Indeed, 
	\begin{equs}
		(\Pi T_2)(t) & =  -i  \int_{0}^{t} e^{i s(k_2^2+ 2 k_1k_2)} \Phi_{k_2}dW_{k_2}( s) \\
		& = -i  \int_{0}^{t} 1 +  i s(k_2^2+ 2 k_1k_2) + \mathcal{O}(s^2(k_2^2+ 2 k_1k_2)^2) \Phi_{k_2}dW_{k_2}(s).
	\end{equs}
	One has to set
	\begin{equs}
		(\Pi^{n,\frac{3}{2}} T_2)(t) =  -i   \Phi_{k_2}(W_{k_2}(t) -  W_{k_2}(0)) - i \int_{0}^{t}   i s(k_2^2+ 2 k_1k_2)  \Phi_{k_2}dW_{k_2}( s)
	\end{equs}
	with an error of order $ \mathcal{O}(s^{5/2} (k_2^2 + 2 k_1 k_2)^2 \Phi_{k_2}) $. Then, the term $ \frac{\Upsilon( T_2)(v)}{S(T_2)} (\Pi^{n,\frac{3}{2}}   T_2 )(t) $ back in physical space is given by
	\begin{equs}
		e^{it\lap}\rbr{-i\Phi W(t)v +  v \int_0^t s \lap\Phi dW(s) + 2 \grad v \int_0^t s \grad \Phi dW(s) }.
	\end{equs}
	\begin{remark}
		The integral $ \int_0^t s \lap\Phi dW(x,s) $ cannot be solved analytically but integrals like this can be approximated by other schemes and it will not affect the regularity of the initial data. Hence, here we assume that this integral can be approximated up to the order of the remainder term in \eqref{picard32}.
	\end{remark}
	For the iterated integral corresponding to $ T_3 $, one has
	\begin{equs}
		(\Pi T_3)(t)  & = -i	\int_0^t e^{i  sk^2}\left(e^{-i s k_{12}^2} (\Pi T_2)(s)\right)\Phi_{k_3}dW_{k_3}(s).
	\end{equs}
	As we have two stochastic iterated integrals, one can replace $  (\Pi T_2)(s) $ by its discretisation up to order $ \mathcal{O}(s^{3/2}) $. Then
	\begin{equs}
		(\Pi T_3)(t)  & = -i	\int_0^t e^{i  sk^2}\left(e^{-i s k_{12}^2} (\Pi^{n,1} T_2)(s)\right)\Phi_{k_3}dW_{k_3}(s) 
		\\ & - i \int_0^t e^{i  sk^2}\left(e^{-i s k_{12}^2} \mathcal{O}(s^{3/2}k_1k_2^2 ) \right)\Phi_{k_3}dW_{k_3}(s)
		\\ & = -	\int_0^t e^{i  s (k_3^2 + 2 k_3 k_{12})}\left( \Phi_{k_2} (W_{k_2}(t)-W_{k_2}(0 ))\right)\Phi_{k_3}dW_{k_3}(s) 
		+ \mathcal{O}(t^{2}k_1k_2^2 ).
	\end{equs}
	Then, one proceeds with a full Taylor expansion of the term $ e^{is ...} $ in the first integral to get:
	\begin{equs}
		(\Pi T_3)(t)  & = -	\int_0^t \left( \Phi_{k_2} (W_{k_2}(t)-W_{k_2}(0 ))\right)\Phi_{k_3}dW_{k_3}(s) 
		+ \mathcal{O}(t^{\frac{5}{2}}k_3^2k_{12}^2 )  +\mathcal{O}(t^{2}k_1k_2^2 ).
	\end{equs}
	In the end, one can fix
	\begin{equs}
		(\Pi^{n,\frac{3}{2}} T_3)(t) =  -	\int_0^t \left( \Phi_{k_2} (W_{k_2}(t)-W_{k_2}(0 ))\right)\Phi_{k_3}dW_{k_3}(s) 
	\end{equs}
	and it is an approximation of $ (\Pi T_3)(t) $ with an error $ \mathcal{O}(t^{2}k_1k_2^2 ) $ asking one derivative on the initial data and two derivatives on $ \Phi $.
	For the tree $ T_4 $, one has from \eqref{def_Pi_T4}
	\begin{equs}
		(\Pi^{n,\frac{3}{2}} T_4)(t) & = -  i   \int_{0}^{t} \Phi_{k_2} \left(W_{k_2}(s) -  W_{k_2}(0)\right)d s
	\end{equs}
	with an error of order $ \mathcal{O}(t^{5/2} k_2^2 k_3^2) $.
	A similar computation for $ (\Pi T_5)(t) $ gives:
	\begin{equs}
		(\Pi^{n,\frac{3}{2}} T_5)(t) & =   i   \int_{0}^{t} \Phi_{k_2} \left(\bar{W}_{k_2}(s) -  \bar{W}_{k_2}(0)\right)d s.
	\end{equs}

	The computation of the discretisation of $ (\Pi T_6)(t) $ and $ (\Pi T_7)(t) $ works as the same as for $  (\Pi T_3)(t) $, one uses approximations of $ (\Pi T_1)(t) $ and $ (\Pi T_3)(t) $ from previous order. The main difference is that now we suppose $ n=2 $ which gives different approximations. Indeed, one has
	\begin{equs}
		(\Pi T_1)(t)  = - i t  + \mathcal{O}(t^2 k_1^2), \quad (\Pi^{n,1} T_1)(t)  = - i t.
	\end{equs}
	One does not need anymore to peform a low regularity expansion as we have assumed that the initial data is in $ H^2 $. Therefore, a full Taylor expansion is enough for approximating $ (\Pi T_1)(t) $.
	A short computation gives for  $ (\Pi T_6)(t) $
	\begin{equs}
		(\Pi T_6)(t)  & = -i	\int_0^t e^{i  sk^2}\left(e^{-i s k_{123}^2} (\Pi^{n,1} T_1)(s)\right)\Phi_{k_4}dW_{k_4}(s) + \mathcal{O}(  t^{\frac{5}{2}} k_1 k_2 k_3)
		\\ &  = -	\int_0^t s \Phi_{k_4}dW_{k_4}(s) + \mathcal{O}(  t^{\frac{5}{2}} k_4^2 k_1) + \mathcal{O}(  t^{\frac{5}{2}} k_1^2)
	\end{equs}
	We have
	\begin{equs}
		(\Pi^{n,\frac{3}{2}} T_6)(t) = 
		-	\int_0^t s \Phi_{k_4}dW_{k_4}(s).
	\end{equs}
	It remains to perform the computation for $ (\Pi T_7)(t) $
	\begin{equs}
		(\Pi T_7)(t)  & = -i	\int_0^t e^{i  sk^2}\left(e^{-i s k_{123}^2} (\Pi T_3)(s)\right)\Phi_{k_4}dW_{k_4}(s)
		\\ & = -i	\int_0^t e^{i  sk^2}\left(e^{-i s k_{123}^2} (\Pi^{n,1} T_3)(s)\right)\Phi_{k_4}dW_{k_4}(s) + \mathcal{O}(t^{\frac{5}{2}}  k_2^2 k_3^2 )
		\\ & = -i	\int_0^t (\Pi^{n,1} T_3)(s) \Phi_{k_4}dW_{k_4}(s) + \mathcal{O}(t^{\frac{5}{2}}  k_4^2 )  + \mathcal{O}(t^{\frac{5}{2}}  k_2^2 k_3^2 ).
	\end{equs}
	Therefore
	\begin{equs}
		(\Pi^{n,\frac{3}{2}} T_7)(t) = -i	\int_0^t (\Pi^{n,1} T_3)(s) \Phi_{k_4}dW_{k_4}(s).
	\end{equs}
	\begin{scheme}\label{scheme:schrodinger_O1}
		This derivation leads us to the following discretisation of Schr\"odinger's equation \ref{schrodinger_equation}.
		\begin{equs}
			S_t(u_\ell) = u_{\ell+1}&=e^{it\Delta}u_{\ell}-it e^{it\Delta} u_{\ell}^{2}\bar{u}_{\ell}
			-i\sqrt{t} e^{it\Delta} u_{\ell} \Phi\chi_{\ell} \\ &\quad   + e^{i t \Delta} \left(  u_{\ell} \int_0^t s \lap\Phi dW(s) + 2 \Psi_1(i t \grad) u_{\ell} \int_0^t s \grad \Phi dW(s) \right)  \\
			& \quad - \frac{t}{2}e^{it\Delta}u_\ell\rbr{\Phi\chi^2_{\ell} - 1} 
			- (1+2 i)t^{3/2}  e^{it\Delta} u_{\ell}^2 \bar{u}_{\ell}\Phi\chi_{\ell} 
			\\ &\quad  +  i e^{it \Delta} u_{\ell} \bar{u}_{\ell}^2  \int_{0}^{t} \Phi \left(\bar{W}(s) -  \bar{W}(0)\right)d s 
			\\ &\quad + i t^{3/2}e^{i t \Delta} u_{\ell} \rbr{\frac16\Phi\chi_{\ell}^3 -  \frac12\text{Tr}(\Phi D\chi_{\ell}\Phi^*)}.
		\end{equs}
		Here $ \Psi_1(it\grad) $ is a filter function defined in \ref{def:filter_function}.
	\end{scheme}
	\begin{theorem} \label{scheme_order_2}
		For $ v \in H^2 $ and $ \Phi$ such that $ \mathrm{Tr}((\Delta^2 \Phi)^2) < + \infty $, the scheme \ref{scheme:schrodinger_O1} approximates \eqref{schrodinger_equation} with $ \sigma=u $ to order $ \mcO(t^2) $ locally.
	\end{theorem}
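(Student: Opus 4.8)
The plan is to run the order-$\mcO(t^2)$ analogue of the argument behind Proposition~\ref{prop:schrodinger_O1/2_local}: first approximate the exact solution by the truncated decorated-tree series, then replace each attached iterated integral by the discretisation derived above, and control the regularity consumed by the error. Concretely, apply Proposition~\ref{tree_series} with $r=\tfrac32$ to \eqref{schrodinger_equation} with $\sigma=u$, giving
\[
u_k(t)=U_k^{3/2}(t,v)+\mcO\!\left(t^{2}\right),\qquad
U_k^{3/2}(t,v)=\sum_{T\in\CT^{3/2,k}_0(R)}\frac{\Upsilon(T)(v)}{S(T)}\,(\Pi T)(t).
\]
Since we have already shown that $\CT^{3/2,k}_0(R)=\{\mathcal{I}(\lambda_k)\}\cup\{\mathcal{I}(\lambda_k T_i):1\le i\le 7,\ k_j\in\mbbZ^d\}$ and computed the corresponding $S(T_i)$, $\Upsilon(T_i)(v)$ and discretisations $(\Pi^{2,3/2}T_i)(t)$ (the linear term being discretised exactly), it remains to bound
\[
U_k^{3/2}(t,v)-U_k^{2,3/2}(t,v)=\sum_{i=1}^{7}\frac{\Upsilon(T_i)(v)}{S(T_i)}\Bigl((\Pi T_i)(t)-(\Pi^{2,3/2}T_i)(t)\Bigr)
\]
and to recognise $U_k^{2,3/2}(t,v)$ as the $k$th Fourier coefficient of the right-hand side of Scheme~\ref{scheme:schrodinger_O1} with $u_\ell=v$.

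\emph{Summing the local errors.} Here one collects the per-tree estimates established in this section: the error of $(\Pi^{2,1}T_1)(t)=-it$ is $\mcO(t^{2}k_1^{2})$; that of $(\Pi^{2,3/2}T_2)(t)$ is $\mcO\!\bigl(t^{5/2}(k_2^{2}+2k_1k_2)^{2}\Phi_{k_2}\bigr)$; of $(\Pi^{2,3/2}T_3)(t)$ is $\mcO(t^{2}k_1k_2^{2})$; of $(\Pi^{2,3/2}T_4)(t)$ and $(\Pi^{2,3/2}T_5)(t)$ are $\mcO(t^{5/2}k_2^{2}k_3^{2})$ (cf.\ \eqref{def_Pi_T4} and Proposition~\ref{counter_example}); of $(\Pi^{2,3/2}T_6)(t)$ is $\mcO\!\bigl(t^{5/2}(k_1^{2}+k_1k_4^{2})\bigr)$; and of $(\Pi^{2,3/2}T_7)(t)$ is $\mcO\!\bigl(t^{5/2}(k_4^{2}+k_2^{2}k_3^{2})\bigr)$. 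Each is $O(t^{2})$ for $t\le 1$; multiplying by $\Upsilon(T_i)(v)/S(T_i)$, which is always a product of at most three Fourier coefficients of $v$, summing over the frequency decorations and using that $H^{2}(\mbbT^{3})$ is a Banach algebra, one bounds $U^{3/2}(t,v)-U^{2,3/2}(t,v)$ in the norm of Definition~\ref{defn:local-error} by $t^{2}$ times a product of Sobolev norms, provided every frequency factor of the form $k_i^{2}$ sitting on a copy of $v$ is absorbed by $\|v\|_{H^{2}}$ — which is exactly the hypothesis $v\in H^{2}$ — and every frequency factor sitting on the noise is absorbed by a trace condition on $\Phi$. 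The worst such factor is the $k_2^{4}\Phi_{k_2}$ coming from the correction term of $T_2$, which forces $\mathrm{Tr}\bigl((\Delta^{2}\Phi)^{2}\bigr)<+\infty$; all other $\Phi$-weights are at most $k^{4}\Phi_k$ and hence controlled by the same condition. The stochastic-integral remainders are turned into the stated powers of $t$ by the It\^o isometry. This yields $u_k(t)-U_k^{2,3/2}(t,v)=\mcO(t^{2})$.

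\emph{Rewriting in physical space.} It remains to transcribe $\sum_i\tfrac{\Upsilon(T_i)(v)}{S(T_i)}(\Pi^{2,3/2}T_i)(t)$ term by term: the outer integrator becomes $e^{it\Delta}$; $T_1$ gives $-it\,e^{it\Delta}u_\ell^{2}\bar u_\ell$; the leading part of $T_2$ gives $-i\sqrt t\,e^{it\Delta}u_\ell\Phi\chi_\ell$ via \eqref{eq:discrete_W}, and its correction gives $e^{it\Delta}\bigl(u_\ell\int_0^t s\lap\Phi\,dW(s)+2\Psi_1(it\grad)u_\ell\int_0^t s\grad\Phi\,dW(s)\bigr)$, where the filter $\Psi_1(it\grad)=\id+\mcO(t\grad)$ of Definition~\ref{def:filter_function} replaces the exact coefficient of $\grad v$ without affecting the $\mcO(t^{2})$ local error (the $\mcO(t\grad)$ remainder multiplies a term already of size $\mcO(t^{3/2})$); $T_3$ gives $-\tfrac t2 e^{it\Delta}u_\ell\bigl(\Phi\chi_\ell^{2}-1\bigr)$ via \eqref{eq:discrete_W^2}; $T_4$ and $T_5$ combine into $i\,e^{it\Delta}u_\ell\bar u_\ell^{2}\int_0^t\Phi\bigl(\bar W(s)-\bar W(0)\bigr)ds$; $T_6$ gives $-(1+2i)t^{3/2}e^{it\Delta}u_\ell^{2}\bar u_\ell\Phi\chi_\ell$ after approximating the Wiener integral $\int_0^t s\,\Phi\,dW(s)$; and $T_7$, a triple It\^o integral, gives $i\,t^{3/2}e^{it\Delta}u_\ell\bigl(\tfrac16\Phi\chi_\ell^{3}-\tfrac12\mathrm{Tr}(\Phi D\chi_\ell\Phi^{*})\bigr)$ by the triple-integral identity recalled in Section~\ref{Sec::2}. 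The sum is precisely Scheme~\ref{scheme:schrodinger_O1}; the integrals $\int_0^t s\lap\Phi\,dW(s)$, $\int_0^t s\grad\Phi\,dW(s)$ and $\int_0^t\Phi(\bar W(s)-\bar W(0))\,ds$ have no closed form but can be further approximated by the schemes of \cite{K.P1992} up to the order of the remainder in \eqref{picard32} without introducing regularity on $v$. Combining the three steps gives $\|u(t_{\ell+1})-S(u_\ell,t)\|_{k,p}=\mcO(t^{2})$.

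\emph{Expected main obstacle.} The analytic ingredients (the algebra property of $H^{2}(\mbbT^{3})$ and the It\^o isometry) are routine; the real work is the bookkeeping in the last two steps — checking that the frequency monomials appearing in all seven discretisation errors are simultaneously covered by the single pair of hypotheses $v\in H^{2}$ and $\mathrm{Tr}((\Delta^{2}\Phi)^{2})<+\infty$, in particular that no tree secretly demands $H^{3}$ data or $\Delta^{3}\Phi$ so that the $T_2$-correction really is the binding constraint, and that the conjugations, symmetry factors and convolution constraints line up so that the Fourier sums reassemble exactly into the nonlinear expressions $u_\ell^{2}\bar u_\ell$, $u_\ell\bar u_\ell^{2}$ and $u_\ell$ appearing in Scheme~\ref{scheme:schrodinger_O1}.
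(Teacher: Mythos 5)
Your proposal follows essentially the same route as the paper's proof: truncate via Proposition~\ref{tree_series} at $r=\tfrac32$, control $\sum_i \frac{\Upsilon(T_i)(v)}{S(T_i)}\bigl((\Pi T_i)(t)-(\Pi^{2,3/2}T_i)(t)\bigr)$ using the per-tree discretisation errors already computed in this section (the paper records the worst case as $\mcO(t^2\Delta^2\Phi\,\Delta v)$ in physical space), and then transcribe $U_k^{2,3/2}$ back into Scheme~\ref{scheme:schrodinger_O1}. The only discrepancy is bookkeeping in the transcription step — in the scheme the coefficient $-(1+2i)t^{3/2}u_\ell^2\bar u_\ell\Phi\chi_\ell$ collects the contributions of both $T_4$ and $T_6$ rather than $T_6$ alone, while $T_5$ alone yields the $u_\ell\bar u_\ell^2$ term — which does not affect the error analysis.
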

	\begin{proof}
		From the previous computation, the scheme comes from the following expansion:
		\begin{equs}
			&	U_{k}^{2,\frac{3}{2}}(t, v) \\ & = e^{-itk^2} v_k +   \sum_{ i =1}^7 e^{-itk^2} \frac{\Upsilon( T_i)(v)}{S(T_i)} (\Pi^{2,\frac{3}{2}}   T_i )(t)
			\\ & = e^{-itk^2} v_k  - \sum_{k = -k_1 + k_2 + k_3}	i t e^{-i t k^2} \bar{v}_{k_1} v_{k_2} v_{k_3}  \\
			& - \sum_{k= k_1 + k_2} i  e^{-i t  k^2} ( \Phi_{k_2} (W_{k_2}(t)-W_{k_2}(0 )) v_{k_1} + \int_{0}^{t}  s(k_2^2+ 2 k_1k_2)  \Phi_{k_2}dW_{k_2}( s)) \\
			& + \sum_{k= k_1 + k_2 + k_3}e^{-i t k^2} \int_0^t \Phi_{k_2} (W_{k_2}(s)-W_{k_2}(0 )) \Phi_{k_3}dW_{k_3}(s)  v_{k_1}
			\\ &  - 2 \sum_{k= k_1 + k_2 - k_3 + k_4} i e^{-itk^2}   \int_{0}^{t} \Phi_{k_2} \left(W_{k_2}(s) -  W_{k_2}(0)\right)d s v_{k_1} \bar{v}_{k_3} v_{k_4}
			\\ &  + \sum_{k= -k_1 - k_2 + k_3 - k_4} i e^{-itk^2}   \int_{0}^{t} \Phi_{k_2} \left(\bar{W}_{k_2}(s) -  \bar{W}_{k_2}(0)\right)d s \bar{v}_{k_1} v_{k_3} \bar{v}_{k_4}
			\\ 	& - \sum_{k=-k_1 + k_2 + k_3 + k_4} e^{-it k^2}	\int_0^t s \Phi_{k_4}dW_{k_4}(s) \bar{v}_{k_1} v_{k_3} v_{k_4}
			\\ & + \sum_{k=k_1 + k_2 + k_3 + k_4}i e^{- i t k^2} \int_0^t	\int_0^s \int_0^{s_1}  \Phi_{k_2} dW_{k_2}(s_2)\Phi_{k_3}dW_{k_3}(s_1) \Phi_{k_4}dW_{k_4}(s) v_{k_1}. 
		\end{equs}
		where all the terms have to be mapped to physical space.
		The scheme is an approximation of the following truncated expansion: 
		\begin{equs}
			U_{k}^{\frac{3}{2}}(t, v) = e^{-itk^2} v_k +   \sum_{ i =1}^7 e^{-itk^2} \frac{\Upsilon( T_i)(v)}{S(T_i)} (\Pi   T_i )(t)
		\end{equs}
		The local error is given by the maximum over the following errors:
		\begin{equs}
			(\Pi   T_i )(t) - (\Pi^{2,\frac{3}{2}}   T_i )(t)
		\end{equs}
		which is of order $ \mathcal{O}(t^{2}\Delta^2 \Phi\Delta v)  $ in physical space.
	\end{proof}
	
	\begin{remark}
		To estimate the gradient operator $ \grad $ we can use the filter function 
		\begin{equs}
			\Psi_1(i \grad t) = \frac{e^{it\grad}-1}{it} = \grad + \mcO(t^2\lap).
		\end{equs}
		On the other hand, for the Laplacian, we can use \[ \Psi_2(i \grad t) =  \rbr{\frac{e^{it\grad}-1}{it}}^2 = \lap + \mcO(t\lap). \] In general, as presented in \cite[Sec. 5.2]{B.B.S2022}, we can use the optimal low regularity filter, \[\Psi_p(i \grad t) = \rbr{\frac{e^{it\grad}-1}{it}}^p, \] to approximate $ \grad^p $.
	\end{remark}
	\subsection{Manakov system}
	Previous schemes by Berg, Cohen and Dujardin \cite{B.C.D2020,B.C.D2021} and Gazzeau \cite{G2014} for the Manakov equation require initial conditions such that $ v\in H^5 $  for schemes of order $ \mcO(t^{1/2})$. Here we derive a scheme by the process described in section \ref{Sec::3} and show that we can obtain a globally order $ \mcO(t^{1/2}) $ scheme with $ u\in H^3 $. Our scheme uses Duhamel iterations to obtain terms up to order $ \mcO(t^{3/2}) $ and then stabilisation is achieved via filter functions which replace gradient operators acting upon the numerical solution. Previous schemes work directly with the mild form and use only Taylor expansion combined with an implicit discretisation of the noise, making the analysis and computation significantly harder. 
	
	The scheme for the Manakov equation is given below by
	
	\begin{scheme}[Manakov System]\label{scheme:manakov_O1/2}
		\begin{align*}
			S_t(u_\ell)&=e^{it\Delta}u_{\ell}-it e^{it\Delta} u_{\ell}^{2} \bar{u}_{\ell} -i\sqrt{t}C_\gamma\sum_{n=1}^3\sigma_n  e^{it\lap}\rbr{\Psi_1\rbr{it\grad} u_\ell }\chi_{n,\ell} \\
			&\quad + \frac{t}{2} C^2_\gamma\hspace{-.3cm}\sum_{\substack{n<m\\ n,m\in\set{1,2,3}}}\hspace{-.3cm}\sigma_m\sigma_n  e^{it\lap} \Psi_2(it\grad) u_\ell \sqbr{\frac12\rbr{\chi^2_{n,\ell} - 1} + \chi_{n,\ell}\chi_{m,\ell}}
		\end{align*}
		where the $ \chi_{n,\ell} $ are independent Gaussian random variables,  $ \Psi_1 $, $\Psi_2$ are filter functions as defined in \ref{def:filter_function}.
	\end{scheme} 
	The derivation of this scheme is similar to the scheme for multiplicative NLS as the nonlinearity is the same. One can use the same decorated formalism by setting the Fourier mode equal to zero on the leaves:
	\begin{equs}
		T_1 = \begin{tikzpicture}[node distance = 14pt, baseline=7]				
			\node[inner] (root) at (0,0) {};			
			\node[inner] (inner4) [above of = root] {};			
			\node[leaf] (leaf1) [above left of = inner4] {$ k_1 $};		
			\node[leaf] (leaf2) [above of = inner4] {$ k_2 $};		
			\node[leaf] (leaf3) [above right of = inner4] {$ k_3 $};		
			\draw[int] (root.north) -- (inner4.south);
			\draw[positive] (inner4.north) -- (leaf2.south);
			\draw[conj] (inner4.north west) -- (leaf1.south east);
			\draw[positive] (inner4.north east) -- (leaf3.south west);		  
		\end{tikzpicture}, \quad	T_2 = \begin{tikzpicture}[node distance = 14pt, baseline=7]	
			\node[inner] (root) at (0,0) {};	
			\node[inner] (inner4) [above of = root] {};	
			\node[leaf] (leaf1) [above left of = inner4] {$ k $};	
			\node[leaf] (leaf2) [above right of = inner4] {$ 0 $};	
			\draw[int] (root.north) -- (inner4.south);	
			\draw[Phi] (inner4.north east) -- (leaf2.south west);	
			\draw[positive] (inner4.north west) -- (leaf1.south east);	 		
		\end{tikzpicture}, \quad  T_3 = \begin{tikzpicture}[node distance = 14pt, baseline=14]
			\node[inner] (root) at (0,0) {};
			\node[inner] (inner1) [above of = root] {};
			\node[inner] (inner2) [above left of = inner1] {};
			\node[inner] (inner3) [above of = inner2] {};
			\node[leaf] (leaf1) [above left of = inner3] {$ k $};
			\node[leaf] (leaf2) [above right of = inner3] {$ 0 $};
			\node[leaf] (leaf3) [above right of = inner1] {$ 0 $};
			\draw[int] (root.north) -- (inner1.south);
			\draw[positive] (inner1.north west) -- (inner2.south east);
			\draw[int] (inner2.north) -- (inner3.south);
			\draw[positive] (inner3.north west) -- (leaf1.south east);
			\draw[Phi] (inner3.north east) -- (leaf2.south west);
			\draw[Phi] (inner1.north east) -- (leaf3.south west); 
		\end{tikzpicture}.
	\end{equs}
	Then, by modifying the definition of $ \Pi $ to sum implicitly over $ m $ to account for the noises we have:
	\begin{equs}
		(\Pi T_1)(t) & = 	- i \int^{t}_0 e^{i s k^2} e^{i s k_1^2}  e^{-i s k_2^2}  e^{-i s k_3^2}d s
		\\
		(\Pi T_2)(t) & =  k \sum_{m=1}^3  \int_{0}^{t} dW_{m}( s)
		\\
		(\Pi T_3)(t) & =  \sum_{n,m\in\set{1,2,3}}
		k \int_0^t \int_0^s d W_m(s_1)  dW_n(s)
	\end{equs}
	where $ k = -k_1 + k_2 + k_3 $ in the first integral.
	The stochastic integrals do not need any discretisation and one can set:
	\begin{equs}
		(\Pi^{n,1} T_2)(t) = 	(\Pi T_2)(t), \quad 	(\Pi^{n,1} T_3)(t) = 	(\Pi T_3)(t).
	\end{equs}
	For the simulation, one can rewrite the first stochastic iterated integral as the same as for NLS using independent Gaussian random variables $ \chi_m $. The second stochastic iterated integral needs a more careful rewriting.
	When $ n=m $, one has
	\begin{equation*}
		\int_0^t \int_0^s   dW_n(s_1) dW_n(s).
	\end{equation*}
	This is a standard iterated Itô integral and can be written as $ \frac{1}{2}(W^2_n(t) - t) $. Next we have the cross terms $ n\neq m $ which have no closed form solution. However, by integration by parts we have the following identity for two independent Wiener processes $ W_n,W_m $,
	\begin{equation*}
		\int_0^t W_m(s) dW_n(s) + \int_0^t W_n(s) dW_m(s) = W_m(t)W_n(t).
	\end{equation*} 
	This means that for the sum over all $ n $ and $ m $ we  have 
	\begin{equ}
		\sum_{n,m\in\set{1,2,3}} \int_0^t \int_0^s   dW_n(s_1) dW_n(s) =\hspace{-0.25cm}\sum_{\substack{n<m\\ n,m\in\set{1,2,3}}}\hspace{-0.25cm}\sqbr{\frac12\rbr{W_n^2(t) - t} + W_n(t)W_m(t)},
	\end{equ}
	If $ n=3 $, one does not need a low regularity approximation for $ (\Pi T_1)(t) $ and can proceed with a full Taylor expansion:
	\begin{equs}
		(\Pi T_1)(t) = - i \int^{t}_0 e^{i s k^2} e^{i s k_1^2}  e^{-i s k_2^2}  e^{-i s k_3^2}d s = - i t + \mathcal{O}(t^2 (k^2 + k_1^2 -k_2^2 -k_3^2))
	\end{equs}
	and therefore, one has
	\begin{equs}
		(\Pi^{n,3} T_1)(t) =  - i t.
	\end{equs}

	\begin{proposition} \label{Manakov_local_error}
		For initial conditions $ v\in H^3 $ the scheme \ref{scheme:manakov_O1/2} has local error of order $ \mcO(t^{3/2}) $.
	\end{proposition}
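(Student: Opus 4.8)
The plan is to deduce Proposition~\ref{Manakov_local_error} from Proposition~\ref{tree_series} exactly as Proposition~\ref{expansion_scheme_NLS} was obtained for the multiplicative NLS, the genuinely new features being the purely temporal (space-independent) noise, the three Brownian motions $W_1,W_2,W_3$ with their Pauli-matrix weights, and the gradient $\grad u$ in the multiplicative term. First I would set $r=1$ in Proposition~\ref{tree_series}, giving $u_k(t)=U^1_k(t,v)+\mcO(t^{3/2})$ with the implied constant depending on $\norm{v}_{H^3}$ and on moments (resp.\ pathwise bounds) of the increments $\chi_{n,\ell}$. Then I would check that the rule $R$ associated with the cubic nonlinearity and the gradient--multiplicative noise generates, among trees of order $\leq 1$, only the planted edge $\mathcal I_{(\mft_1,0)}(\lambda_k)$, the cubic tree $T_1$ (order $1$), the single stochastic tree $T_2$ (order $1/2$) and the doubly iterated stochastic tree $T_3$ (order $1$) of the Manakov paragraph; every other generated tree already has order $\geq 3/2$. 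Hence $U^1_k(t,v)=e^{-itk^2}v_k+\sum_{i=1}^3 e^{-itk^2}\tfrac{\Upsilon(T_i)(v)}{S(T_i)}(\Pi T_i)(t)$ with $S$, $\Upsilon$, $\Pi$ as recorded before the statement (the noises summed over $W_1,W_2,W_3$ and weighted by the $\sigma_n$).

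The decisive simplification I would exploit --- one not available for the NLS --- is that the Manakov noise depends only on $t$, so every $\Xi$-leaf carries Fourier mode $0$ and in \eqref{def_Pi} all oscillatory factors $e^{\pm is(\cdot)}$ attached to stochastic edges collapse to $1$. Consequently $(\Pi^{n,1}T_2)(t)=(\Pi T_2)(t)$ and $(\Pi^{n,1}T_3)(t)=(\Pi T_3)(t)$ \emph{exactly}; the only step is to put the It\^o integrals $\sum_m\int_0^tdW_m(s)$ and $\sum_{n,m}\int_0^t\int_0^sdW_m(s_1)dW_n(s)$ in closed form through \eqref{eq:discrete_W}, \eqref{eq:discrete_W^2} and It\^o's formula, replacing the double one by $\sum_{n<m}[\tfrac12(W_n^2(t)-t)+W_n(t)W_m(t)]$ and the single one by $\sqrt t\,\chi$, which introduces no error. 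The only true discretisation error among the trees therefore comes from $T_1$: here the resonance $k^2+k_1^2-k_2^2-k_3^2$ is not exploited and a full Taylor expansion yields $(\Pi T_1)(t)=-it+\mcO\big(t^2(k^2+k_1^2-k_2^2-k_3^2)\big)$, so $(\Pi^{n,1}T_1)(t)=-it$ and, back in physical space, the error is $\mcO(t^2)$ times a product of $u$'s with two derivatives of $v$ --- bounded in $\norm{\cdot}_{k,p}$ by the algebra property of the Sobolev scale and $\norm{v}_{H^3}$, hence admissible since $t^2=o(t^{3/2})$.

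Next I would pass from the $(\Pi^{n,1}T_i)(t)$ to the terms of Scheme~\ref{scheme:manakov_O1/2}, which differ only in that the bare operators $\grad$ (in the $\sqrt t$-sized $T_2$-term) and $\grad^2$ (in the $t$-sized $T_3$-term, produced by composing the two gradients coming from iterating the noise) are replaced by the filters $\Psi_1(it\grad)$, $\Psi_2(it\grad)$ of Definition~\ref{def:filter_function}. Using $\Psi_1(it\grad)=\grad+\mcO(t\lap)$ and $\Psi_2(it\grad)=\lap+\mcO(t|\grad|^3)$, these substitutions cost $\mcO(t^{3/2}\lap v)$ and $\mcO(t^2|\grad|^3v)$ in $\norm{\cdot}_{k,p}$, both within $\mcO(t^{3/2})$ once $v\in H^3$. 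Finally, combining the $\mcO(t^{3/2})$ remainder of Proposition~\ref{tree_series} with these finitely many discretisation errors, using $\norm{\sigma_n}=1$ together with Burkholder--Davis--Gundy (for the strong bound) or a Kolmogorov/Borel--Cantelli argument producing an a.s.\ finite random constant (for the pathwise bound), gives $\norm{E(u(t_\ell),t)-S(u(t_\ell),t)}_{k,p}\leq Ct^{3/2}$, which is Definition~\ref{defn:local-error} with $\gamma=3/2$.

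The hardest part will be the derivative bookkeeping --- certifying that no contribution demands more than three spatial derivatives of $v$. For the cubic term this is the familiar Sobolev--Taylor remainder estimate, but for the doubly iterated stochastic term $T_3$ one must track how the gradient--multiplicative structure composes with itself and then interacts with the filter $\Psi_2$ (whose leading correction is $\mcO(t|\grad|^3)$), and confirm the outcome still fits the $H^3$ budget, namely the regularity $H^{s+1}$ with $s>d/2$ already required by the order-$\mcO(t^{3/2})$ truncation in Proposition~\ref{tree_series}. A secondary technical point is controlling the stochastic double integral pathwise and uniformly in the step, which I would handle via BDG followed by a continuity/Borel--Cantelli argument.
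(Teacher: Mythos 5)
Your proposal is correct and follows essentially the same route as the paper's proof: truncate the tree series of Proposition~\ref{tree_series} at $r=1$ to isolate $T_1,T_2,T_3$, note that the space-independence of the Manakov noise puts mode $0$ on every stochastic leaf so all oscillatory phases on stochastic edges collapse and $(\Pi^{n,1}T_i)(t)=(\Pi T_i)(t)$ exactly for $i=2,3$ (with the It\^o rewriting you describe), and fully Taylor-expand $(\Pi T_1)(t)=-it+\mcO\bigl(t^2(k^2+k_1^2-k_2^2-k_3^2)\bigr)$. The one point you explicitly leave open --- why $H^3$ is the right regularity --- is settled in the paper by an observation you do not quite state: each iteration of the gradient-multiplicative noise $\sigma_j\grad u\,dW_j$ costs one spatial derivative and one factor $t^{1/2}$, so the first \emph{neglected} term in the $r=1$ truncation is the triple stochastic iteration, which is of size $\mcO(t^{3/2})$ and carries exactly three derivatives of $v$; it is this remainder, not the generic $H^{s+1}$, $s>d/2$ wellposedness requirement you invoke, that forces $v\in H^3$, while the discretisation errors themselves (from $T_1$ and the filters) only require $H^2$.
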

	\begin{proof}
		We first start with a truncated tree series:
		\begin{equs}
			U_{k}^{1}(t, v) = e^{-itk^2} v_k +   \sum_{ i =1}^3 e^{-itk^2} \frac{\Upsilon( T_i)(v)}{S(T_i)} (\Pi   T_i )(t)
		\end{equs}
		which is a local approximation of the solution up to order $\mathcal{O}(t^{3/2})$ and one needs already an initial data $ v $ in $ H^3$. This is due to the fact that from the third iteration of Duhamel's formulation in the stochastic integral one has three derivatives. These terms are neglected as they  are of order $\mathcal{O}(t^{3/2})$.
		Then, the scheme is just a discretisation of each iterated integral:
		\begin{equs}
			U_{k}^{3,1}(t, v) = e^{-itk^2} v_k +   \sum_{ i =1}^3 e^{-itk^2} \frac{\Upsilon( T_i)(v)}{S(T_i)} (\Pi^{3,1}   T_i )(t)
		\end{equs}
		and the error made is of order $ \mathcal{O}(t^{3/2}) $ with an initial date that must in $ H^2 $ coming from the discretisation of $ (\Pi T_1)(t) $.
	\end{proof}
	\subsection{Stability and global error}
	The global convergence of the schemes presented in section \ref{Sec::4} is proved by using the Lax-Equivalence theorem which states that a scheme is globally convergent if and only if it is stable and consistent, which is obtained for SPDE's in \cite{O.P.W2022} and for PDE's is theorem 1 in \cite{T2012}.  Stability will be proved according to the definition \ref{def:stabitlity_and_consistancy} using standard tools from stochastic analysis built upon arguments typical in the analysis of numerical schemes for PDE, see for example section 3.4 or 4.4 of \cite{B2023}.  
	\subsubsection{Stability}
	\begin{proposition}\label{lem:stability}
		The numerical schemes \ref{scheme:schrodinger_O1/2} , \ref{scheme:schrodinger_O1} and \ref{scheme:manakov_O1/2} are stable.
	\end{proposition}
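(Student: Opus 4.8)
The plan is to reduce the stochastic stability of Definition~\ref{def:stabitlity_and_consistancy} to a one-step perturbation estimate and then to iterate it by a discrete Gronwall argument combined with a martingale maximal inequality, exactly as announced in the remark following that definition. All three schemes have the common shape
\[
S_t(u)=e^{it\Delta}\Bigl(u+t\,a_t(u)+\sqrt t\,b_t(u)\,\chi_\ell+t\,c_t(u)\,q(\chi_\ell)+\cdots\Bigr),
\]
where $\chi_\ell$ is the Gaussian increment on the $\ell$-th step (independent of $\mcF_{t_\ell}$), $q$ is a fixed polynomial, the dots collect the terms that do not depend on $u$, and $a_t,b_t,c_t$ are the maps built from the cubic nonlinearity, the smoothing operator $\Phi$, and the filter functions $\Psi_1,\Psi_2$. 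Since $e^{it\Delta}$ preserves the Sobolev norm $\norm{\cdot}_{k,p}$, the $u$-independent additive noise pieces cancel in the difference $e(v_1,v_2):=S_t(v_1)-S_t(v_2)$ and one only has to control the Lipschitz behaviour of $a_t,b_t,c_t$.

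First I would establish the one-step bound. On a ball $\set{\norm{u}_{k,p}\leq R}$ the maps $a_t,b_t,c_t$ are Lipschitz: the cubic part $u\mapsto u^2\bar u$ is locally Lipschitz in the working Sobolev norm by the standard product estimates under the standing assumption on $k$; the filter functions satisfy $\norm{\Psi_j(it\grad)w}_{k,p}\leq\norm{w}_{k,p}$ by Definition~\ref{def:filter_function}, so they do not enlarge the Lipschitz constant and, crucially, they absorb the gradient operators that would otherwise appear; and the $\Phi$-dependent coefficients — including the random ones such as $\int_0^t s\,\Delta\Phi\,dW(s)$ — act as $H^k$-multipliers with finite $L^p(\Omega)$-norms by the trace conditions $\mathrm{Tr}((\Delta\Phi)^2)<\infty$, resp.\ $\mathrm{Tr}((\Delta^2\Phi)^2)<\infty$, already imposed for the local error. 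Since the cubic term is only locally Lipschitz, I would run the argument up to the stopping times $\tau_R=\inf\set{nt:\ \norm{S_t^n(v_1)}_{k,p}\vee\norm{S_t^n(v_2)}_{k,p}>R}$ (equivalently, after truncating the nonlinearity outside a ball of radius $R$) and remove the truncation at the end. Conditioning on $\mcF_{t_\ell}$ and using $\mbbE[\chi_\ell]=0$ together with the finiteness of all moments of $\chi_\ell$, the mean-zero contributions vanish to first order, giving
\[
\mbbE\bigl[\norm{e(v_1,v_2)}_{k,p}^p\,\big|\,\mcF_{t_\ell}\bigr]\leq\bigl(1+C_{R,T}\,t\bigr)\,\norm{v_1-v_2}_{k,p}^p;
\]
for $p>2$ the higher-order stochastic terms are controlled by a Burkholder--Davis--Gundy inequality for the discrete stochastic convolution driven by the $\chi_\ell$.

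Next I would iterate. With $D_n=\norm{S_t^{n\wedge\tau_R}(v_1)-S_t^{n\wedge\tau_R}(v_2)}_{k,p}$ the previous inequality gives $\mbbE[D_{n+1}^p\mid\mcF_{t_n}]\leq(1+C_{R,T}t)D_n^p$ on $\set{nt<\tau_R}$; writing $D_{n+1}^p=\mbbE[D_{n+1}^p\mid\mcF_{t_n}]+M_{n+1}$ with $\{M_n\}$ a martingale-difference sequence, discrete Gronwall yields $\mbbE[D_N^p]\leq e^{C_{R,T}(T-t_0)}\norm{v_1-v_2}_{k,p}^p$ for $N=(T-t_0)/t$, and Doob's maximal inequality applied to $\sum M_n$ upgrades this to $\mbbE[\sup_{0\leq n\leq N}D_n^p]\leq C_{R,T}'\,\norm{v_1-v_2}_{k,p}^p$. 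Markov's inequality then gives, for every $\eps>0$,
\[
\sup_{t_i\in\pi(t_0,T)}\mbbP\bigl(\norm{S_t^N(v_1)-S_t^N(v_2)}_{k,p}^p\geq\eps,\ Nt<\tau_R\bigr)\leq\frac{C_{R,T}'}{\eps}\,\norm{v_1-v_2}_{k,p}^p,
\]
which tends to $0$ as $\norm{v_1-v_2}\to0$; letting $R\to\infty$, and using that $\mbbP(\tau_R\leq T)\to0$ uniformly in the small step $t$ (the same moment bound applied to $S_t^n(v_1)$ alone), removes the localization and yields Definition~\ref{def:stabitlity_and_consistancy}. For Scheme~\ref{scheme:manakov_O1/2} the three steps are identical, the only difference being that $\grad u_\ell$ is replaced throughout by $\Psi_1(it\grad)u_\ell$, which is exactly what keeps the gradient term in the Manakov noise harmless.

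The main obstacle is the one-step estimate. Within it the delicate points are, first, that the schemes are genuinely not globally Lipschitz because of the cubic nonlinearity, which forces the stopping-time/truncation bookkeeping, and second, that the multiplicative stochastic corrections — $u_\ell\Phi\chi_\ell$, $u_\ell^2\bar u_\ell\Phi\chi_\ell$ and the It\^o-corrected quadratic and cubic terms in $\chi_\ell$ — must be shown to contribute only $\mcO(t)$, rather than $\mcO(\sqrt t)$, to the conditional $p$-th moment of the increment; this is precisely where the mean-zero structure of the Gaussian increments (and, for $p>2$, the Burkholder--Davis--Gundy inequality) is used. Once this is in place the rest is the standard ODE/SDE stability machinery of \cite{H.W.L2002,B.B2001}.
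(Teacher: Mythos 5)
Your proposal is correct in outline but follows a genuinely different route from the paper. The paper's proof is \emph{pathwise}: after using that $e^{it\Delta}$ is an isometry on $H^k$ and that the cubic nonlinearity is locally Lipschitz on a ball $B_R(H^2)$ (via Plancherel and the Sobolev algebra property), it bounds every stochastic coefficient ($\Phi W(t)$, $\int_0^t s\,\Delta\Phi\,dW(s)$, the It\^o-corrected quadratic and cubic chaoses) \emph{almost surely} using Borel's tail bound $\mbbP(\sup_{t\in[0,1]}|I^f_n(t)|\geq K)\leq C_1e^{-C_2K^{2/n}}$ for a fixed Wiener chaos. This yields a random one-step Lipschitz constant $C_{t,R,W^*_\ell}$, whose product over the $N$ steps is a submartingale; Markov's inequality plus a martingale maximal inequality then give the convergence in probability required by Definition~\ref{def:stabitlity_and_consistancy}. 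You instead run the classical mean-square SDE machinery: condition on $\mcF_{t_\ell}$, exploit $\mbbE[\chi_\ell]=0$ (and BDG for $p>2$) to show the $\mcO(\sqrt t)$ stochastic increments contribute only $\mcO(t)$ to the conditional $p$-th moment, and close with discrete Gronwall and Doob. Both routes work; yours gives sharper quantitative ($L^p$) control of the propagated difference, but at the cost of the delicate point you yourself flag — for the $L^p$-based Sobolev norm with $p\neq 2$ one cannot simply take norms before averaging, or the cross term produces a $1+\mcO(\sqrt t)$ factor whose $N$-fold product blows up, so the mean-zero cancellation must be performed at the level of the increment, not its norm. The paper's pathwise bound sidesteps this entirely, which is presumably why it was chosen. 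One further remark: your stopping-time localization and the claim that $\mbbP(\tau_R\leq T)\to 0$ requires an a priori moment bound on the iterates $S^n_t(v_1)$ themselves, which is nontrivial for an explicit scheme with a cubic nonlinearity; note that the paper does not attempt this and simply states the one-step estimate for $v_1,v_2\in B_R(H^2)$ with constants depending on $R$, so your localized statement already matches what is actually proved there, and the de-localization step is an extra claim you would need to substantiate separately.
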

	\begin{remark}
		The proof of lemma \ref{lem:stability} for Schrödinger's equations and the Manakov system follows along the same lines as they contain the same nonlinearity and the stochastic parts are linear. We produce the proof for Scheme \ref{scheme:schrodinger_O1}, for which stability of the lower order Scheme \ref{scheme:schrodinger_O1/2} is a special case.  This proof also uses all of the ideas needed to prove the stability of Scheme \ref{scheme:manakov_O1/2} for the Manakov system.
	\end{remark}
	\begin{proof}[of stability for \ref{scheme:schrodinger_O1}]	
		First note that $ e^{it\lap} $ is an isometry on $ H^k $. We have for all $ v_1,v_2\in B_R(H^2)=\set{w\in H^2:\norm{w}<R} $ the existence of a constant depending on $ R $ such that
		\begin{equs}
			\label{eq:stability}
			\begin{aligned}
				\norm{S_t(v_1)-S_t(v_2)}_{k,p} &= \bigg|\bigg|e^{it\Delta}(v_1 - v_2) -it e^{it\Delta}\left(v^2_1\bar{v}_1 - v^2_2\bar{v}_2\right)\nonumber\\
				&\quad -i e^{it\Delta} (v_1 - v_2) \Phi W(t)  + e^{i t \Delta}(v_1 - v_2)\int_0^t s \lap\Phi dW(s) \nonumber\\
				&\quad +e^{i t \Delta} 2 \Psi_1(it\grad) (v_1 - v_2) \int_0^t s \grad \Phi dW(s)   \nonumber\\
				&\quad - \frac{1}{2}e^{it\Delta}(v_1 - v_2)\rbr{(\Phi W)^2(t) - t} \nonumber\\
				&\quad - t(2i+1)\rbr{v^2_1\bar{v}_1 - v^2_2\bar{v}_2}e^{it\lap}\Phi W(t) \nonumber \\
				&\quad +  i e^{it \Delta}\rbr{v_1\bar{v}_1^2 - v_2\bar{v}_2^2} \int_{0}^{t} \Phi \bar{W}(s)d s\nonumber\\
				&\quad + i e^{i t \Delta} (v_1 - v_2) \rbr{\frac16(\Phi W)^3(t) - \frac12t\text{Tr}(\Phi^2) \Phi W(t)}\bigg|\bigg|_{k,p}.
			\end{aligned}
		\end{equs}
		For the nonlinearity we first use the fact that the Schr\"odinger group is an isometry in $ H^s $ and perform the following estimate using Plancherel's theorem ,
		\begin{align*}
			&	\norm{v_1^2\bar{v}_1 - v_2^2\bar{v}_2}_{k,p}\\
			& = \norm{v_1^2\bar{v}_1 - v_1^2\bar{v}_2 + v^2_1\bar{v}_2 - v_2^2\bar{v}_2}_{k,p} \\
			&\leq \rbr{\norm{v^2_1(v_1-v_2)}_{k,p}+ \norm{v_2(v_1+v_2)(v_1 - v_2)}_{k,p}}\\
			&\leq  C_R\norm{v_1-v_2}_{k,p}.
		\end{align*}	
		To bound $ \Phi W(t) $ we note that it is Gaussian and thus bounded almost surely. Similarly, let $ I^f_n(t) $ be the $ n $th Weiner chaos with deterministic integrand $ f $, then it was shown by Borel in \cite{B1978} that the tail bound,
		\[\mbbP\rbr{\sup_{t\in[0,1]}\abs{I^f_n(t)}\geq K}\leq C_1e^{-C_2K^{2/n}},\] 
		holds. Hence for a fixed $ n $ by passing to the limit $K\rightarrow \infty $ we obtain almost sure boundedness. This takes care of the terms 
		\begin{equation*}
			\int_0^t s \lap\Phi dW(s), \quad (\Phi W)^2(t) - t \mathrm{Tr}(\Phi^2), \quad\frac16(\Phi W)^3(t) - \frac12t \mathrm{Tr}(\Phi^2) \Phi W(t).
		\end{equation*}
		That leaves
		\[\int_{0}^{t} \Phi \bar{W}(s)d s\leq t\sup_{s<t}|\Phi \bar{W}(s)|.\]
		Denote the supremum of all of the stochastic processes involved in the right hand side of \eqref{eq:stability} by $ W^* $ ($W^*_{\ell}$ for the $\ell$th step), then we deduce that there exists a constant $ C_{t, R, W^*} > 1$ such that 
		\begin{equation}\label{eq:stability2}
			\norm{S_t(v_1)-S_t(v_2)}_{k,p} \leq C_{t, R, W^*}\norm{v_1-v_2}_{k,p}.
		\end{equation}
		By induction we obtain
		\begin{equation}\label{eq:stability1}
			\norm{S^N_t(v_1)-S^N_t(v_2)}_{k,p}\leq\prod_{\ell=1}^NC_{t, R, W^*_\ell} \norm{v_1-v_2}_{k,p}.
		\end{equation}
		If we define $ X_i = \prod_{\ell=1}^iC_{R,t,W_i^* } $ then the process $ \set{X_i}_{i\in\mbbN}$ is a sub-martingale. Furthermore, by Markov's inequality and \eqref{eq:stability1} we have, for any $ n\in\mbbN $ such that $ n\leq N $,
		\begin{align*}
			\mbbP\rbr{\norm{S^n_t(v_1) - S^n_t(v_2)}_{k,p}\geq \eps}&\leq\frac{\mbbE\sqbr{\norm{S^n_t(v_1) - S^n_t(v_2)}_{k,p}}}{\eps}\\
			&\leq \frac{\mbbE\sqbr{X_n}\norm{v_1 - v_2}_{k,p}}{\eps}.
		\end{align*}
		Taking the supremum of both sides and using a martingale inequality we obtain,
		\begin{align*}
			\sup_{n\leq N}\mbbP\rbr{\norm{S^n_t(v_1) - S^n_t(v_2)}_{k,p}\geq \eps}&\leq \sup_{n\leq N}\frac{\mbbE\sqbr{X_n}\norm{v_1 - v_2}_{k,p}}{\eps}\\
			& \leq C_p\frac{\mbbE\sqbr{X_N}\norm{v_1 - v_2}_{k,p}}{\eps}.
		\end{align*}
		Taking the limit as $ \norm{v_1-v_2}_{k,p}\rightarrow 0 $ gives the result.
	\end{proof}
	\subsubsection{Global error}
	We now state the global error for the three schemes presented previously in the section. The proof follows from an induction argument on the form of the local error followed by Doob's maximal inequality for strong error and a result by Kloeden and Neuenkirch \cite{K.N2007} for the pathwise error. 
	\begin{theorem}\label{prop:global_error}
		The schemes \ref{scheme:schrodinger_O1/2} and \ref{scheme:manakov_O1/2} are globally convergent to order $ \mcO(t^{1/2}) $. The scheme \ref{scheme:schrodinger_O1} is globally convergent to order $ \mcO(t) $.
	\end{theorem}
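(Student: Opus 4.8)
The plan is to obtain global convergence from consistency and stability via the Lax-equivalence theorem, in the SPDE formulation of \cite{O.P.W2022} (and \cite{T2012} in the deterministic case), and then to convert the local error bounds of Propositions~\ref{prop:schrodinger_O1/2_local} and~\ref{Manakov_local_error} and Theorem~\ref{scheme_order_2} into quantitative global rates by the telescoping argument familiar from ODE analysis \cite{H.W.L2002} in its stochastic adaptation \cite{B.B2001}. Writing $e_n = u(t_n) - v_n$ for the error after $n$ steps and inserting the exact flow $E_t$ between successive applications of the scheme $S_t$, one telescopes
\begin{equs}
	e_N = \sum_{\ell=1}^{N} \rbr{ S_t^{\,N-\ell}\rbr{E_t\rbr{v(t_{\ell-1})}} - S_t^{\,N-\ell}\rbr{S_t\rbr{v(t_{\ell-1})}} },
\end{equs}
so that each summand is the image, under $N-\ell$ iterations of the scheme, of a single one-step error $E_t(v(t_{\ell-1})) - S_t(v(t_{\ell-1}))$, which by the cited local error results is bounded in $\norm{\cdot}_{k,p}$ by $C t^{\gamma}$ with $\gamma = \tfrac32$ for Schemes~\ref{scheme:schrodinger_O1/2} and~\ref{scheme:manakov_O1/2} and $\gamma = 2$ for Scheme~\ref{scheme:schrodinger_O1}.

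First I would run an a priori stopping-time bootstrap: let $\tau_R$ be the first mesh point at which $\norm{v_n}_{k,p} > R$, carry out the estimate below on the event $\set{\tau_R > T}$, where the stability bound \eqref{eq:stability2} is available at every step, and then close the argument by observing that $u$ is bounded in $H^k$ on $[0,T]$ (from the assumed local well-posedness), so that the resulting error estimate forces $\mbbP(\tau_R \leq T) \to 0$ as $R \to \infty$, uniformly for small $t$. On this event the propagated one-step errors satisfy, by iterating \eqref{eq:stability2},
\begin{equs}
	\norm{ S_t^{\,N-\ell}\rbr{E_t\rbr{v(t_{\ell-1})}} - S_t^{\,N-\ell+1}\rbr{v(t_{\ell-1})} }_{k,p} \leq \rbr{ \prod_{j=\ell}^{N-1} C_{t,R,W^*_j} }\, C t^{\gamma},
\end{equs}
and summing over the $N \leq T/t$ steps gives the crude accumulated bound $\norm{e_N}_{k,p} \leq C\, T\, t^{\gamma-1}\, \max_{\ell}\prod_{j=\ell}^{N-1} C_{t,R,W^*_j}$. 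This already displays the exponents $\gamma - 1 \in \set{\tfrac12,\,1}$ claimed in the theorem, provided the random product of stability constants is controlled.

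Controlling that product is the one genuinely stochastic step and the main obstacle. Each factor has the form $C_{t,R,W^*_j} = 1 + t\, A_j$ with $A_j$ built from suprema over $[t_{j-1},t_j]$ of the Gaussian processes and Wiener chaoses appearing on the right-hand side of \eqref{eq:stability}, whence $\prod_j C_{t,R,W^*_j} \leq \exp\rbr{ t\sum_j A_j } = \exp\rbr{ T\cdot \tfrac1N\sum_j A_j }$; since $\set{ X_i = \prod_{\ell \leq i} C_{t,R,W^*_\ell} }$ is a submartingale, as observed in the proof of Proposition~\ref{lem:stability}, Doob's maximal inequality bounds the $L^p$ norm of $\max_{i\leq N} X_i$ by a constant multiple of $\mbbE[X_N^p]$, and the latter is bounded uniformly in $t$ by the Borell-type tail bounds of \cite{B1978} and Fernique's theorem, which supply all the exponential moments needed for the averages $\tfrac1N\sum_j A_j$. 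Taking expectations in the crude bound and again using Doob's maximal inequality to pull the supremum over the partition inside yields the strong global error estimate
\begin{equs}
	\mbbE\sqbr{ \sup_{t_i \in \pi(t_0,T)} \norm{u(t_i) - v_i}_{k,p}^p } \leq C t^{(\gamma-1)p}.
\end{equs}
The pathwise statement then follows from the strong one by the transfer theorem of Kloeden and Neuenkirch \cite{K.N2007}, at the cost of an arbitrarily small $\eps > 0$ in the exponent, absorbed into the $\mcO$-notation. Specialising $\gamma = \tfrac32$ gives global order $\mcO(t^{1/2})$ for Schemes~\ref{scheme:schrodinger_O1/2} and~\ref{scheme:manakov_O1/2}, and $\gamma = 2$ gives global order $\mcO(t)$ for Scheme~\ref{scheme:schrodinger_O1}.
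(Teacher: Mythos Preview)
Your proposal is correct and follows essentially the same approach as the paper: both obtain global convergence from local error plus stability via a Lady Windermere's fan argument (you via an explicit telescoping sum, the paper via the equivalent recursive inequality $\norm{e_{n+1}}_{k,p}\leq A_T t^{1+\alpha} + e^{B_{t,R,W^*_n}}\norm{e_n}_{k,p}$), then use the submartingale structure of the accumulated stability constants together with Doob's maximal inequality for the strong bound and the Kloeden--Neuenkirch transfer \cite{K.N2007} for the pathwise statement. Your added stopping-time bootstrap and the $C_{t,R,W^*_j}=1+tA_j$ analysis of the stability product fill in details that the paper leaves implicit.
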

	\begin{proof}
		We will use the notation defined in subsection \ref{ss::notation}. We wish to bound the global error $ e_N = S_t^{N}(v) - v(T) $, where. By adding zero we obtain 
		\begin{equation}\label{eq:global_err1}
			\norm{e_{n+1}}_{k,p}=\norm{S_t\rbr{v(t_n)} - v(t_{n+1})}_{k,p}+\norm{S^{n+1}_t(v)-S_t(v(t_n))}_{k,p}.
		\end{equation}
		The first term in \eqref{eq:global_err1} is the local error and it follows from \eqref{eq:stability2} in the stability analysis that there exists a constant $ B_{t,R,W^*_n }>0 $ such that
		\begin{equation}
			\norm{S_t(v(t_n)) - S^{n+1}_t(v)}_{k,p} \leq e^{B_{t,R,W^*_n }}\norm{e_n}_{k,p}.
		\end{equation}
		Hence it follows from the local error analysis that, for constants $ A_T $ and $B_{t,R,W^*_n} $ and assuming local error order $ \mcO(t^{1+\alpha}) $, we have
		\begin{equation}
			\norm{e_{n+1}}_{k,p}\leq A_Tt^{1+\alpha} + e^{B_{t,R,W^*_n}}\norm{e_n}_{k,p},\quad e_0 = 0.
		\end{equation}
		By induction we find, using $ t = T/n $,
		\begin{align}
			\norm{e_{n+1}}_{k,p}&\leq A_T Te^{nB_{t,R,W^*_n}} t^\alpha \\
			&=A_{T}e^{B_{T,R,W^*_n}}t^{\alpha}\\
			&=A_{T,R,W^*_n}t^{\alpha}.
		\end{align}
		Note that $ A_{T,R,X_n} $ is a sub-martingale in $ n $ and
		that loss of order $ \mcO(t) $ arising due to the accumulation of error throughout the iterations. It follows then by almost sure boundedness of the Weiner chaos and Doob's maximal inequality that
		\begin{equation}\label{eq:strong_err}
			\mbbE\rbr{\sup_{t_n \in \pi(0,T)}\norm{e_n}^p_{k,p}}\leq \mbbE[A_{T,R,X_N,p}]t^{p\alpha}.
		\end{equation} 
		We can move to pathwise convergence, again by noting that $ t=T/N $, and rewriting \ref{eq:strong_err} in terms of $ n $ then applying lemma 2.2 of \cite{K.N2007} to obtain, for any $ \eps>0 $
		\begin{equation*}
			\sup_{t_n \in \pi(0,T)}\norm{e_n}_{k,p}\leq \mbbE[A_{T,R,X_N,p}]t^{\alpha-\eps}.
		\end{equation*}
	\end{proof}

	\bibliography{lowreg_random} 
	\bibliographystyle{plainurl}

\end{document}